\newtheorem{THM}{Theorem}[section]
\newtheorem*{THM*}{Theorem~\ref{main}}
\newtheorem{LEM}[THM]{Lemma}
\newtheorem{OBS}[THM]{Observation}
\newtheorem{COR}[THM]{Corollary}
\newtheorem{CON}[THM]{Conjecture}
\newtheorem{CLAIM}{Claim}
\theoremstyle{remark}
\newcommand*{\qedblack}{\hfill\ensuremath{\blacksquare}}
\theoremstyle{definition}
\begin{document}
\title{Improper coloring of graphs with no odd clique minor}
\author{Dong Yeap Kang}
\author{Sang-il Oum}
\address{Department of Mathematical Sciences, KAIST, 291 Daehak-ro
  Yuseong-gu Daejeon, 34141 South Korea}
\email{dyk90@kaist.ac.kr}
\email{sangil@kaist.edu}
\thanks{Supported by the National Research Foundation of Korea (NRF) grant funded by the Korea government (MSIT) (No. NRF-2017R1A2B4005020).}
\thanks{The first author has been supported by TJ Park Science Fellowship of POSCO TJ Park Foundation.}
\date{\today}

\begin{abstract}
As a strengthening of Hadwiger's conjecture,
Gerards and Seymour conjectured that every graph with no odd $K_t$ minor is $(t-1)$-colorable.
We prove two weaker variants of this conjecture.
Firstly, we show that for each $t \geq 2$, every graph with no odd $K_t$ minor has a partition of its vertex set into $6t-9$ sets $V_1, \dots, V_{6t-9}$ such that each $V_i$ induces a subgraph of bounded maximum degree. 
Secondly, we prove that for each $t \geq 2$, every graph with no odd $K_t$ minor has a partition of its vertex set into $10t-13$ sets $V_1, \dots, V_{10t-13}$ such that each $V_i$ induces a subgraph with components of bounded size. 
The second theorem improves a result of Kawarabayashi (2008), which states that the vertex set can be partitioned into $496t$ such sets.\\

\textup{2010} \textit{Mathematics Subject Classification}:\:\:Primary: 05C15; Secondary: 05C83
\end{abstract}

\keywords{Odd minor, Hadwiger's conjecture, Defective coloring, Improper coloring, Chromatic number}

\maketitle

\section{Introduction}\label{sec:intro}
Every graph in this paper is finite and simple. For a nonnegative integer $k$, a graph $G$ is (properly) $k$-colorable if there are $k$ pairwise disjoint sets $V_1$, $\dots$, $V_k$  with $V(G) = \bigcup_{i=1}^{k}{V_i}$ such that $V_i$ induces a subgraph of maximum degree $0$ for $1 \leq i \leq k$. 

In 1943, Hadwiger~\cite{Hadwiger1943} proposed the following question, which is called ``Hadwiger's conjecture", one of the deepest conjectures in graph theory. 
For more on this conjecture and its variants, the readers are referred to the recent survey of Seymour~\cite{Seymour2015b}.
\begin{CON}[Hadwiger~\cite{Hadwiger1943}]\label{hadwiger}
For each integer $t \geq 1$, every graph with no $K_t$ minor is $(t-1)$-colorable.
\end{CON}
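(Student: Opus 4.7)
The plan would be to argue by contradiction on a minimum counterexample $G$: a graph with no $K_t$ minor that requires at least $t$ colors, minimizing $\abs{V(G)}+\abs{E(G)}$. Standard reductions show such a $G$ is vertex-critical for $t$-coloring, has minimum degree at least $t-1$, and admits no vertex separation of order less than $t-1$: any such separation $(A,B)$ could be used to color each side by minimality and paste the colorings together along the small cut after permuting colors. Next one would try to locate in $G$ either (i) a vertex $v$ whose neighborhood admits a $(t-2)$-coloring leaving one color free for $v$, or (ii) a small connected subgraph $H$ such that $G/H$ still has no $K_t$ minor and some proper $(t-1)$-coloring of $G/H$ lifts to $G$. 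These two lines go back to Hadwiger and to Dirac respectively.

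The inductive base cases settled so far are illuminating. For $t\le 4$ the statement is classical. For $t=5$, Wagner proved that every graph with no $K_5$ minor is assembled by clique-sums from planar graphs and a single exceptional graph $V_8$, so the case $t=5$ is equivalent to the Four Color Theorem. For $t=6$, Robertson, Seymour, and Thomas showed that any minimum counterexample contains a vertex whose deletion leaves an \emph{almost} planar graph, again reducing to the Four Color Theorem. The natural way to try to continue is to prove a Wagner-type structure theorem: every graph with no $K_t$ minor has a tree decomposition whose torsos are ``almost'' embeddable in surfaces of bounded genus, with few apex vertices and narrow vortices, and then color each torso with $t-1$ colors in a coordinated way across the separators.

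The main obstacle, and the reason I would not expect this plan to succeed in its current form, is precisely that no such sufficiently sharp structure theorem is known for $t\ge 7$. The Robertson--Seymour graph minor structure theorem does describe $K_t$-minor-free graphs approximately, but the number of apex vertices and the vortex widths grow with $t$ in a way that leaves only a very weak bound on the chromatic number once one tries to color torso by torso and absorb the apex classes. For this reason all recent progress on the chromatic number side, from Kostochka and Thomason's $O(t\sqrt{\log t})$ bound through the $O(t(\log t)^{\beta})$ bounds of Norin--Postle--Song and Postle, proceeds via density and probabilistic arguments rather than structural coloring, and still leaves a substantial gap to the conjectured $t-1$. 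The strategy a careful reader of this paper should anticipate instead is to relax the coloring requirement itself, allowing each color class to induce a subgraph of bounded maximum degree or bounded component size, which is exactly the setting in which the authors obtain their $6t-9$ and $10t-13$ bounds.
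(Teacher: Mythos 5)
You have correctly recognized that this statement is Hadwiger's conjecture itself, which the paper states only as a conjecture and does not prove; it remains open for $t \ge 7$, so there is no proof in the paper to compare against. Your proposal is therefore not a proof but an accurate survey of why the standard attack lines fail: the minimum-counterexample reductions (criticality, minimum degree at least $t-1$, no small separators) are sound but do not close the argument; the cases $t \le 6$ rest on the Four Color Theorem via Wagner's and Robertson--Seymour--Thomas's structure results; and the general graph minor structure theorem is too lossy in its apex and vortex parameters to yield $t-1$ colors. Your final observation — that the realistic move is to relax the coloring requirement to defective or clustered coloring — is exactly the setting of this paper, which proves the $6t-9$ and $10t-13$ bounds for the odd-minor variant (Conjecture~\ref{oddhadwiger}) rather than attacking Conjecture~\ref{hadwiger} directly. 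In short: there is no gap in your reasoning to flag, because you have not claimed a proof; you have correctly diagnosed that none is available and identified where the paper's actual contribution lies.
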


Robertson, Seymour, and Thomas~\cite{RST1993a} proved that the conjecture is true for $t \leq 6$, but the conjecture remains open for $t \geq 7$.
Kostochka~\cite{Kostochka1982, Kostochka1984} and Thomason~\cite{Thomason1984, Thomason2001} proved that graphs with no $K_t$ minor are  $O(t \sqrt{\log t})$-colorable, by showing that these graphs contain a vertex of degree $O(t \sqrt{\log t})$. It is still open whether every graph with no $K_t$ minor is $ct$-colorable for some $c>0$ independent of $t$.

Gerards and Seymour~(see~\cite[Section 6.5]{JT1995a}) proposed the following odd-minor variant of Hadwiger's conjecture.
\begin{CON}[Gerards and Seymour~{(see~\cite[Section 6.5]{JT1995a})}]\label{oddhadwiger}
For each integer $t \geq 1$, every graph with no odd $K_t$ minor is $(t-1)$-colorable.
\end{CON}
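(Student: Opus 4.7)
The plan is to prove the conjecture by strong induction on $t$, with a minimum counterexample argument combined with a reduction via the bipartite double cover. The base cases $t \le 2$ are trivial; for $t=3$ the absence of an odd $K_3$ minor means $G$ has no odd cycle, so $G$ is bipartite and hence $2$-colorable. For $t = 4$ one would need a short case analysis, using that $K_4$-minor-free graphs are series-parallel and adapting this to the odd setting.

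For the inductive step, fix $t \ge 5$ and take $G$ to be a counterexample minimizing $|V(G)|+|E(G)|$. Standard reductions should give that $G$ is connected, has minimum degree at least $t-1$ (otherwise delete a low-degree vertex and extend the $(t-1)$-coloring guaranteed by induction), and is $(t-1)$-vertex-critical. I would next eliminate small separators: if $S \subseteq V(G)$ is a separator with $|S| \le t-2$, then each side can be $(t-1)$-colored inductively, and one should be able to permute colors on one side to agree on $S$, contradicting minimality. After these reductions, $G$ is highly connected with large minimum degree.

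The heart of the argument would pass to the bipartite double cover $\widetilde{G}$: odd walks in $G$ correspond to walks between the two copies in $\widetilde G$, and odd $K_t$ minors in $G$ correspond to $K_t$ minors in $\widetilde G$ whose branch sets each intersect both copies. I would then attempt to combine a Kostochka--Thomason-type density bound, reformulated for such ``two-sided'' minors of $\widetilde G$, with the structural consequences of high connectivity and large minimum degree in $G$, in order to actually exhibit an odd $K_t$ minor in $G$ and derive a contradiction. A purely density-based application of known bounds only yields $O(t\sqrt{\log t})$ colors (Geelen--Gerards--Reed--Seymour--Vetta), so a genuinely structural argument would be needed to close the gap down to $t-1$.

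The main obstacle is fundamental: the conjecture contains Hadwiger's conjecture (apply it after subdividing every edge of the given graph, turning ordinary minors into odd ones), which itself remains open for $t \ge 7$, so any complete proof is at least as hard as Hadwiger. The decisive missing ingredient would be either a full structure theorem for odd-$K_t$-minor-free graphs, analogous to Robertson--Seymour--Thomas's treatment of the $t \le 6$ cases, or a qualitative improvement of the Kostochka--Thomason density bound for odd minors from $O(t\sqrt{\log t})$ to $O(t)$. Both are recognized open problems, and the proposed induction will stall precisely at the step where one tries to extract an odd $K_t$ minor from the accumulated connectivity and minimum degree constraints.
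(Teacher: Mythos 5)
This statement is Conjecture~\ref{oddhadwiger}, not a theorem: the paper states it as an open conjecture of Gerards and Seymour and offers no proof of it. The authors only prove much weaker relaxations (Theorems~\ref{mainthm} and~\ref{mainthm2}, which allow bounded defect or bounded clustering and use $6t-9$ resp.\ $10t-13$ colors), and they explicitly record that the conjecture is known only for $t\le 4$ (Catlin) with $t=5$ announced by Guenin. So there is no proof in the paper against which to compare yours.

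Your write-up is, accordingly, not a proof but a correctly self-diagnosed sketch. The reductions you list (minimum degree at least $t-1$, criticality, elimination of separators of size at most $t-2$ by permuting colors on one side) are standard but already incomplete as stated --- merging colorings across a separator $S$ requires more than permuting colors when $G[S]$ is not complete --- and, more importantly, the step where you ``extract an odd $K_t$ minor from the accumulated connectivity and minimum degree constraints'' is exactly the open problem. As you note yourself, the conjecture implies Hadwiger's conjecture (subdivide every edge once), which is open for $t\ge 7$, and the best known bound via density arguments is $O(t\sqrt{\log t})$ colors. Your honest identification of where the argument stalls is accurate, but the proposal does not establish the statement, and nothing in this paper would let you complete it; the paper's actual contribution is to sidestep the conjecture by relaxing the notion of coloring.
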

Catlin~\cite{Catlin1979} proved this conjecture for $t=4$, and Guenin~\cite{Guenin05} announced a proof for $t=5$, but the proof has not been written.
Geelen, Gerards, Reed, Seymour, and Vetta~\cite{GGRSV2009} proved that every graph with no odd $K_t$ minor is $O(t \sqrt{\log t})$-colorable. 

A \emph{defective coloring} (see~\cite{CGJ1997,Wood2018}) is a coloring that relaxes the degree condition. A graph $G$ is \emph{$k$-colorable with defect $d$} if there are $k$ pairwise disjoint sets $V_1 , \dots , V_k$ with $V(G) = \bigcup_{i=1}^{k}{V_i}$ such that every $V_i$ induces a subgraph of maximum degree at most $d$. Note that $G$ is $k$-colorable if and only if $G$ is $k$-colorable with defect $0$.
A \emph{clustered coloring} is a coloring that relaxes the size of monochromatic components. A graph $G$ is \emph{$k$-colorable with clustering $M$} if  there are $k$ pairwise disjoint sets $V_1 , \dots , V_k$ with $V(G) = \bigcup_{i=1}^{k}{V_i}$ such that every $V_i$ induces a subgraph having no component with more than $M$ vertices.
For a class $\mathcal C$ of graphs, the \emph{defective chromatic number} of $\mathcal C$ is the minimum $k$ such that for some $d$, all graphs in $\mathcal C$ are $k$-colorable with defect $d$. 
Similarly the \emph{clustered chromatic number} of $\mathcal C$ is the minimum $k$ such that for some $M$, all graphs in $\mathcal C$ are $k$-colorable with clustering $M$. 

\medskip
We present two theorems, both of which
are relaxations of Conjecture~\ref{oddhadwiger} for graphs with no odd $K_t$ minor.
Our first theorem is about defective coloring.

\begin{THM}\label{mainthm}
For each integer $t \geq 2$, there exists an integer $s = s(t)$ such that every graph $G$ with no odd $K_t$ minor is $(6t-9)$-colorable with defect $s$.
\end{THM}

We remark that the number $6t-9$ of colors cannot be reduced to the number less than $t-1$ (see Theorem~\ref{thm:weakhad}). 

Our second theorem is about clustered coloring.
Kawarabayashi~\cite{Kawarabayashi2008} proved  that
the class of graphs with no odd $K_t$ minor has
clustered chromatic number at most $496t$.
\begin{THM}[Kawarabayashi~\cite{Kawarabayashi2008}]\label{thm:weakoddhad}
For each integer $t \geq 2$, there is an integer $C = C(t)$ such that every graph $G$ with no odd $K_t$ minor is $496t$-colorable with clustering $C$.
\end{THM}

We  improve $496t$ to $10t-13$ as follows.

\begin{THM}\label{mainthm2}
For each integer $t \geq 2$, there exists an integer $C = C(t)$ such that every graph $G$ with no odd $K_t$ minor is $(10t-13)$-colorable with clustering $C$.
\end{THM}

We also remark that $10t-13$ cannot be reduced to the number less than $t-1$.
Both Theorems~\ref{mainthm} and~\ref{mainthm2} cannot be extended for list-colorings, which we will discuss in Section~\ref{sec:conclude}.

The paper is organized as follows.
In Section~\ref{sec:survey} we review related results on minors,
some of which will be used in our proof.
We briefly introduce some basic notions in Section~\ref{sec:term}, discuss the structure of graphs with no odd $K_t$ minor in Section~\ref{sec:structure}, and prove Theorems~\ref{mainthm} and~\ref{mainthm2} in Section~\ref{sec:coloring}. In Section~\ref{sec:conclude}, we make some further remarks, including extension of our main results to a slightly larger class of graphs. An appendix reviews elementary concepts of signed graphs and minors.

\section{Previous results on improper coloring
  and forbidden minors}\label{sec:survey}
There are many studies regarding improper colorings of graphs with forbidden minors. Kawarabayashi and Mohar~\cite{KM2007a} proved that
the clustered chromatic number of the class of graphs with no $K_t$ minor is at most $\lceil \frac{31}{2}t \rceil$.
This was improved to $\lceil \frac{7t-3}{2} \rceil$ by Wood~\cite{Wood2010}. Edwards, Kang, Kim, Oum, and Seymour~\cite{EKKOS2014} investigated defective coloring of graphs with no $K_t$ minor, and proved that
the defective chromatic number of the graphs with no $K_t$ minor
equals $t-1$.

\begin{THM}[Edwards, Kang, Kim, Oum, and Seymour~\cite{EKKOS2014}]\label{thm:weakhad}
For each integer $t \geq 1$, there exists an integer $s(t) = O(t^2 \log t)$ such that every graph $G$ with no $K_t$ minor is $(t-1)$-colorable with defect $s(t)$. Moreover, this is sharp in the sense that we cannot reduce the number $t-1$ of sets to $t-2$.
\end{THM}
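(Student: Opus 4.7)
\medskip
The plan is to treat the two halves of the theorem---existence and sharpness---separately, and to prove the existence by induction on $t$. The base cases $t=1,2$ are trivial ($K_2$-minor-free means edgeless, so one color with defect $0$ suffices). For the inductive step, I would reduce to the following structural lemma: \emph{every $K_t$-minor-free graph $G$ contains a connected subgraph $H$ with $|V(H)| = O(t^2\log t)$ such that $G - V(H)$ has no $K_{t-1}$ minor}. Granting this lemma, apply the inductive hypothesis to $G - V(H)$ to obtain a $(t-2)$-coloring with defect $s(t-1)$, and color all of $V(H)$ with one new color; since $|V(H)|$ is bounded, this last color class has defect at most $|V(H)|-1$, and so $s(t) = \max(s(t-1), |V(H)|-1) = O(t^2\log t)$.

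To prove the structural lemma, I would grow $H$ greedily. Start with $H=\{v\}$ for an arbitrary vertex $v$. At each step, if $G - V(H)$ still has a $K_{t-1}$ minor with branch sets $B_1,\dots,B_{t-1}$, first prune each $B_i$ to a minimal connected subgraph realizing the required adjacencies; the Kostochka--Thomason density bound applied to $K_{t-1}$-minor-free graphs lets us arrange $|B_i| = O(t\sqrt{\log t})$. Because $G$ has no $K_t$ minor and $H$ is connected, $H$ must fail to be adjacent to some $B_i$---otherwise $H, B_1, \dots, B_{t-1}$ would witness a $K_t$ minor in $G$. Absorb that $B_i$, together with a shortest path inside $G - \bigcup_{j \neq i} B_j$ from $H$ to $B_i$, into $H$. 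This keeps $H$ connected, strictly decreases a suitable potential counting the ``available'' branch sets, and adds only $O(t\sqrt{\log t})$ vertices. A careful amortization over at most $O(t\sqrt{\log t})$ iterations then gives $|V(H)| = O(t^2\log t)$.

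For the sharpness, I would construct, for each $d$, a $K_t$-minor-free graph admitting no $(t-2)$-coloring with defect $d$. The base $t=3$ is the star $K_{1,d+1}$, which is $K_3$-minor-free (a tree) and whose unique $1$-coloring has defect $d+1$. For general $t$, a recursive construction---informally, adding a universal ``hub'' vertex joined to many disjoint copies of the sharpness example at level $t-1$---produces $K_t$-minor-free graphs on which any $(t-2)$-coloring must, by pigeonhole applied at each level of the recursion, contain a monochromatic vertex of defect greater than $d$. Verifying $K_t$-minor-freeness through the recursion is the routine part.

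The main obstacle is the structural lemma, specifically matching the bound $O(t^2\log t)$. A naive iteration (for example, adding one branch set per step without pruning) gives a much weaker estimate. The crux is an amortization that simultaneously (i) keeps $H$ connected throughout, (ii) eventually destroys every $K_{t-1}$ minor of $G - V(H)$, and (iii) bounds both the per-step growth of $H$ and the total number of iterations by $O(t\sqrt{\log t})$, so that the two factors multiply to the claimed $O(t^2\log t)$. The Kostochka--Thomason density bound is what makes both factors quantitatively attainable.
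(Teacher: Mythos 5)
First, a remark on the comparison itself: the paper does not prove Theorem~\ref{thm:weakhad} at all --- it is quoted from Edwards, Kang, Kim, Oum, and Seymour~\cite{EKKOS2014} --- so what follows measures your proposal against the argument in that reference.

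The existence half of your proposal rests on the structural lemma that every $K_t$-minor-free graph $G$ contains a \emph{connected} subgraph $H$ with $|V(H)| = O(t^2\log t)$ such that $G - V(H)$ has no $K_{t-1}$ minor. That lemma is false. Take $G$ to be the disjoint union of $N$ copies of $K_{t-1}$ with $N$ huge: $G$ has no $K_t$ minor, any connected $H$ lies inside a single copy, and $G - V(H)$ still contains $K_{t-1}$ as a subgraph. Assuming $G$ connected does not help: let $G$ be a spider whose center is joined by an edge to one vertex of each of $N$ private copies of $K_{t-1}$; every block of $G$ is a $K_{t-1}$ or a $K_2$, so $G$ has no $K_t$ minor, yet deleting any $O(t^2\log t)$ vertices leaves an untouched copy of $K_{t-1}$. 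Your greedy construction exhibits exactly this failure mode: on the spider it absorbs one clique after another and never terminates in a bounded number of iterations. Two auxiliary steps are also unsound in general: the ``shortest connecting path'' from $H$ to $B_i$ has no a priori length bound, and branch sets of a $K_{t-1}$ minor cannot always be pruned to $O(t\sqrt{\log t})$ vertices (subdivide every edge of $K_{t-1}$ a large number $L$ of times; every model of $K_{t-1}$ in the resulting graph has branch sets with $\Omega(L)$ vertices). So the inductive step collapses.

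The proof in~\cite{EKKOS2014} replaces your global deletion statement by a local one. The key observation is that if $W$ is a connected vertex set in a graph $G$ with no $K_t$ minor, then $G[N(W)\setminus W]$ has no $K_{t-1}$ minor, since $W$ could serve as a $t$-th branch set. The induction is therefore \emph{rooted}: one carries along a small connected set $W$ occupying one color class, colors its neighborhood with the remaining colors by induction on $t$, and recurses on the components of $G - N[W]$, each of which attaches to the already-colored part in a controlled way. The bound $O(t^2\log t)$ arises as the product of two factors of order $t\sqrt{\log t}$ --- the size to which $W$ can be kept and the Kostochka--Thomason degeneracy of $K_{t-1}$-minor-free graphs --- not from an amortized absorption argument. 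By contrast, your sharpness construction (an apex over many disjoint copies of the previous level, with a pigeonhole argument at each level) is the standard one and is correct as sketched.
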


They also proved that the clustered chromatic number of the class of graphs with no $K_t$ minor is at most $4t-4$. Liu and Oum~\cite{liu2017partitioning} proved that for every graph $H$, every graph $G$ with no $H$-minor and maximum degree at most $\Delta$ is $3$-colorable with clustering $f(H,\Delta)$ for some function $f$, which generalizes the result of Esperet and Joret~\cite{esperet2014} for graphs embeddable on surfaces of bounded Euler genus. Combined with Theorem~\ref{thm:weakhad}, this implies that the clustered chromatic number of the class of graphs with no $K_t$ minor is at most $3t-3$. Van den Heuvel and Wood~\cite{van1704improper} proved that every graph with no $K_t$ minor is $(2t-2)$-colorable with clustering $\lceil \frac{t-2}{2} \rceil$, using different proofs that do not rely on the excluded minor structure theorem. Dvo\v{r}\'ak and Norin~\cite{dvovrak2017islands} proved that the clustered chromatic number of the class of graphs with no $K_t$ minor and treewidth at most $w$ is at most $t-1$, and announced that the clustered chromatic number of the class of graphs with no $K_t$ minor equals $t-1$.

What happens if the forbidden graph $H$ is not complete?
Let $I_t$ be a graph on $t$ vertices with no edges, and for graphs $G$ and $H$, let $G+H$ be a graph obtained from the disjoint union of $G$ and $H$ by adding an edge between each vertex of $G$ and each vertex of $H$. For positive integers $s$ and $t$, let $K_{s,t}^*$ be a graph obtained from $K_s + I_t$ by subdividing every edge joining vertices of the subgraph $K_s$ once. Recently, Ossona de Mendez, Oum, and Wood~\cite{ossonademendez2016} investigated defective coloring for various graph classes. One of their results implies the following, which extends Theorem~\ref{thm:weakhad} to a larger class of graphs.

\begin{THM}[Ossona de Mendez, Oum, and Wood~\cite{ossonademendez2016}]\label{thm:genweakhad}
  For integers $s,t \geq 1$ and real numbers $\delta_1, \delta_2 > 0$, there exists $M = M(s,t,\delta_1 , \delta_2)$ such that
  every graph $G$ satisfying the following three conditions is $s$-colorable with defect $M$.
\begin{enumerate}
\item $G$ contains no $K_{s,t}^*$ as a subgraph.
\item Every subgraph of $G$ has average degree at most $\delta_1$.
\item For every graph $H$ whose $1$-subdivision is a subgraph of $G$, the average degree of $H$ is at most $\delta_2$.
\end{enumerate}
Moreover, this is sharp in the sense that we cannot reduce the number $s$ to $s-1$.
\end{THM}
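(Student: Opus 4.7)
The plan is to prove the upper bound by induction on $s$. For the base case $s=1$, observe that $K_1$ has no edges to subdivide, so $K_{1,t}^* = K_{1,t}$ is a star; hence condition (i) alone says $\Delta(G) \le t-1$, and using a single color gives a $1$-coloring with defect $t-1$.

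For the inductive step with $s \ge 2$, I would fix sufficiently large parameters $D = D(s,t,\delta_1,\delta_2)$ and $t' = t'(s,t,\delta_1,\delta_2)$ to be chosen later, set $B = \{v \in V(G) : \deg_G(v) \ge D\}$ and $X = V(G) \setminus B$, and establish the following key claim: $G[B]$ contains no $K_{s-1,t'}^*$ subgraph. Granting this claim, $G[B]$ inherits conditions (ii) and (iii) (both clearly pass to subgraphs) and satisfies the $K_{s-1,t'}^*$-free version of (i), so the inductive hypothesis applied to $G[B]$ furnishes an $(s-1)$-coloring with defect $M' = M(s-1, t', \delta_1, \delta_2)$. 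Since every vertex of $X$ has fewer than $D$ neighbors in $G$, assigning a new color $s$ to all of $X$ yields an $s$-coloring of $G$ with defect at most $\max(M', D-1)$, giving $M(s,t,\delta_1,\delta_2)$ of the desired form.

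To prove the claim, I would argue by contradiction, assuming a $K_{s-1,t'}^*$ sits inside $G[B]$ with high-degree branch vertices $b_1,\dots,b_{s-1}$ and $t'$ dominated vertices, and then extending this structure to a forbidden $K_{s,t}^*$ in $G$. Since each $b_i$ has degree at least $D$, a double-counting / pigeonhole argument produces many vertices $v$ joined to each $b_i$ by a length-$2$ path; on the other hand, condition (iii) applied to the bipartite $1$-subdivision between $\{b_1,\dots,b_{s-1}\}$ and candidate partners bounds how freely these length-$2$ paths can proliferate, allowing us to select internally disjoint subdivision paths $p_{is}$ without collision. An iterative thinning (picking $v$'s partnered with $b_1$, then with $b_2$, and so on) isolates a candidate $b_s$, after which condition (ii) applied to the common neighborhood of $\{b_1,\dots,b_s\}$ yields the required $t$ further dominated vertices. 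The main obstacle is exactly this quantitative extension step: balancing $D$ and $t'$ against $s, t, \delta_1, \delta_2$ so that both the supply of common length-$2$ connectors and the supply of common neighbors survive the pigeonhole losses. Finally, for the sharpness assertion, the Edwards--Kang--Kim--Oum--Seymour examples from Theorem~\ref{thm:weakhad} are $K_s$-minor-free and thus contain no $K_{s,t}^*$ subgraph (since $K_{s,t}^*$ has $K_s$ as a minor), while $K_s$-minor-free graphs satisfy (ii) and (iii) via the Kostochka--Thomason bound applied to $G$ and to any $H$ whose $1$-subdivision embeds in $G$; these same examples cannot be $(s-1)$-colored with any bounded defect, proving optimality of $s$.
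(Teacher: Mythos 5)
First, a point of reference: the paper does not prove Theorem~\ref{thm:genweakhad} at all --- it is quoted from Ossona de Mendez, Oum, and Wood~\cite{ossonademendez2016} --- so there is no in-paper proof to compare against. Judged on its own terms, your base case $s=1$ is correct ($K_{1,t}^*=K_{1,t}$, so (i) forces $\Delta(G)\le t-1$), but the inductive step rests on a claim that is simply false, not merely hard to quantify. Take $s=2$ and $t=1$, so $K_{2,1}^*=C_4$ and the class defined by (i)--(iii) with $\delta_1=\delta_2=2$ contains every tree. Let $G$ be the depth-two broom: a root $v$ with children $u_1,\dots,u_m$, each $u_i$ having $D$ leaf-children, where $m\ge\max(D,t')$. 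Then $\deg_G(v)=m\ge D$ and $\deg_G(u_i)=D+1$, so $B=\{v,u_1,\dots,u_m\}$ and $G[B]$ is the star $K_{1,m}$, which contains $K_{1,t'}^*=K_{1,t'}$ no matter how large you chose $D$ and $t'$. The inductive call on $G[B]$ therefore returns a $1$-colouring of unbounded defect and the scheme collapses (any induction for larger $s$ must pass through this step). The underlying obstruction is that a vertex can have enormous degree solely because it has many high-degree neighbours that share no common neighbourhood whatsoever; such configurations produce no $K_{s,t}^*$, no dense subgraph, and no dense $1$-subdivision, so none of (i)--(iii) can be invoked against them. This is not an artifact of a bad example: iterated brooms of this kind are essentially the extremal graphs forcing $s$ colours, so any correct proof must assign $v$ and the $u_i$ different colours even though all of them are ``high degree.'' A partition of $V(G)$ by degree thresholds cannot do this; the proof in~\cite{ossonademendez2016} instead orders the vertices (using the degeneracy supplied by (ii)), designates for each vertex at most $s-1$ ``important'' earlier neighbours --- conditions (i) and (iii) are what cap this number at $s-1$ --- and colours greedily so that each vertex avoids the colours of its important neighbours, with the defect bound extracted afterwards from the ordering.

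Your sharpness argument is essentially right in spirit but off by one: to rule out $s-1$ colours you need graphs that are not $(s-1)$-colourable with bounded defect, and by Theorem~\ref{thm:weakhad} these are the extremal graphs with no $K_{s+1}$ minor, not $K_s$. They do lie in the class, because $K_{s,t}^*$ has a $K_{s+1}$ minor (contract the subdivided edges of the $K_s$ part and keep one dominated vertex), and (ii), (iii) follow from the Kostochka--Thomason bound as you say. With that correction the sharpness half stands; the upper-bound half needs a genuinely different decomposition.
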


Since $K_{s,t}^*$ is a bipartite $K_s + I_t$ subdivision, Theorem~\ref{thm:genweakhad} implies that the defective chromatic number of the class of graphs with no bipartite $K_s + I_t$ subdivision equals $s$ as follows. (The lower bound is obtained
by Theorem~\ref{thm:weakhad}.)

\begin{COR}\label{cor:genbipweakhad}
For positive integers $s$ and $t$, there is an integer $N = N(s,t)$ such that every graph $G$ with no bipartite $K_s + I_t$ subdivision is $s$-colorable with defect $N$.
\end{COR}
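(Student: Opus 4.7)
My plan is to derive the corollary by direct application of Theorem \ref{thm:genweakhad}, so the task reduces to verifying its three hypotheses (i)--(iii) for $G$ with constants $\delta_1, \delta_2$ depending only on $s$ and $t$. Write $r = s+t$. Two simple observations drive the reduction. First, $K_{s,t}^*$ is itself a bipartite subdivision of $K_s + I_t$, with the two parts being $V(K_s)$ and $V(I_t)$ together with the inserted midpoints of the $K_s$-edges; this immediately gives hypothesis (i). Second, any bipartite subdivision of $K_r$ contains a bipartite subdivision of $K_s + I_t$ as a subgraph: designate $s$ of the branch vertices as the $K_s$-part, designate $t$ others as the $I_t$-part, and discard the subdivision paths joining two $I_t$-vertices.

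For hypothesis (ii), I would invoke the classical theorem (due to Erd\H{o}s, or in the sharpened form of Koml\'os and Szemer\'edi) that a graph of average degree at least $c_1 r^2$ contains a $1$-subdivision of $K_r$. Since any $1$-subdivision is bipartite, this produces a bipartite $K_r$ subdivision, and then the second observation yields a bipartite $K_s + I_t$ subdivision. As the property of having no bipartite $K_s + I_t$ subdivision is closed under subgraphs, setting $\delta_1 := c_1 r^2$ forces every subgraph of $G$ to have average degree at most $\delta_1$.

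For hypothesis (iii), suppose that a $1$-subdivision $G_0$ of some graph $H$ is a subgraph of $G$. By the Bollob\'as--Thomason / Koml\'os--Szemer\'edi theorem on topological minors, if $H$ has average degree at least $c_2 r^2$ then $H$ contains a subdivision $T$ of $K_r$. Lifting $T$ to $G_0$ by replacing each edge of $T$ with its associated length-two path in $G_0$ produces a $K_r$ subdivision sitting inside $G_0 \subseteq G$. Because $G_0$ is bipartite, so is this $K_r$ subdivision, and the second observation once more yields a bipartite $K_s + I_t$ subdivision in $G$, a contradiction. Hence (iii) holds with $\delta_2 := c_2 r^2$.

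Having verified (i)--(iii), Theorem \ref{thm:genweakhad} furnishes the desired integer $N := M(s, t, \delta_1, \delta_2)$ with $G$ being $s$-colorable with defect $N$. The one delicate point is the bipartiteness bookkeeping in (iii): one must check that the lifted $K_r$ subdivision really is a subgraph of $G_0$, so that its bipartiteness comes for free from the bipartiteness of $G_0$. Beyond that, the argument is a routine packaging of standard density and topological-minor bounds, and I expect this bipartiteness transfer to be the only step warranting detailed verification.
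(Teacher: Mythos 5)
Your overall strategy---reducing to Theorem~\ref{thm:genweakhad} by verifying hypotheses (i)--(iii) with $\delta_1,\delta_2 = O((s+t)^2)$---is exactly the paper's, and your treatment of (i) and (iii) is correct: $K_{s,t}^*$ is indeed a bipartite $K_s+I_t$ subdivision with the bipartition you describe, and the lift of a $K_{s+t}$ subdivision $T\subseteq H$ into the $1$-subdivision $G_0$ really is a subgraph of $G_0$, so its bipartiteness comes for free. The gap is in (ii). The theorem you invoke there---that average degree at least $c_1 r^2$ forces a \emph{$1$-subdivision} of $K_r$---is false. The Bollob\'as--Thomason / Koml\'os--Szemer\'edi theorem only produces a topological $K_r$ whose linking paths may be arbitrarily long, and no average-degree hypothesis can force a $1$-subdivision: a $1$-subdivision of $K_3$ is a $6$-cycle, and there exist graphs of arbitrarily large average degree with girth greater than $6$, hence containing no $1$-subdivision of $K_r$ for any $r\ge 3$. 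So as written your verification of (ii) does not go through, and since (ii) is where the bipartiteness of the forbidden subdivision must be exploited, this is the one hypothesis that genuinely needs an idea rather than a citation.

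The repair is the standard max-cut trick, which is what the paper does: if a subgraph $H$ of $G$ has average degree at least $2c_0(s+t)^2$, pass to a bipartite spanning subgraph $H_0$ of $H$ with at least $|E(H)|/2$ edges; then $H_0$ has average degree at least $c_0(s+t)^2$ and so contains a $K_{s+t}$ subdivision, which is bipartite because it lies inside the bipartite graph $H_0$, and your second observation converts it into a bipartite $K_s+I_t$ subdivision, a contradiction. Taking $\delta_1 := 2c_0(s+t)^2$ (note the factor $2$) and leaving $\delta_2 := c_0(s+t)^2$ as you have it, the rest of your argument is correct and coincides with the paper's proof.
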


\begin{proof}
  By~\cite{BT98, KS96}, there is $c_0 > 0$ such that for each integer $p \geq 1$, every $n$-vertex graph with average degree at least $c_0 p^2$ contains $K_p$ as a topological minor. Since $G$ contains no bipartite $K_s + I_t$ subdivision, the graph $G$ contains no $K_{s,t}^*$ as a subgraph, and no bipartite $K_{s+t}$ subdivision. If there is a subgraph $H$ of $G$ with average degree at least $2c_0 (s+t)^2$, then let $H_0$ be a bipartite spanning subgraph of $H$ with at least $|E(H)|/2$ edges. Since $H_0$ has average degree at least $c_0 (s+t)^2$, it follows that $H$ contains a bipartite $K_{s+t}$ subdivision, contradicting the assumption on $G$. Hence every subgraph of $G$ has average degree at most $2c_0 (s+t)^2$. If there is a graph $H$ whose $1$-subdivision is a subgraph of $G$, then the average degree of $H$ is at most $c_0 (s+t)^2$, because otherwise $H$ contains $K_{s+t}$ as a topological minor, and
  its $1$-subdivision is bipartite.

  By Theorem~\ref{thm:genweakhad},
  $G$ is $s$-colorable with defect $M(s,t,2c_0(s+t)^2, c_0(s+t)^2)$.
\end{proof}

Since $K_t + I_1$ is isomorphic to $K_{t+1}$, Theorem~\ref{thm:weakhad} can be extended to graphs with no bipartite clique subdivision.

\begin{COR}
  For each integer $t \geq 1$,
  the defective chromatic number
  of the class of graphs with no bipartite $K_{t+1}$ subdivision
  equals $t$.

\end{COR}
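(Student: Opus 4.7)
The plan is to prove both directions (upper bound and sharpness) by reducing to the results already available. For the upper bound, I would simply specialize Corollary~\ref{cor:genbipweakhad} to the case $s=t$ and ``$t$''$=1$, using that $K_{t}+I_{1}$ is isomorphic to $K_{t+1}$. This immediately yields an integer $N=N(t,1)$ such that every graph with no bipartite $K_{t+1}$ subdivision is $t$-colorable with defect $N$, so the defective chromatic number of this class is at most $t$.

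For the matching lower bound I would argue by class containment. Every (bipartite) subdivision of $K_{t+1}$ in a graph $G$ witnesses $K_{t+1}$ as a (topological, hence ordinary) minor of $G$. Taking the contrapositive, if $G$ has no $K_{t+1}$ minor then $G$ has no bipartite $K_{t+1}$ subdivision. Consequently, the class of graphs with no $K_{t+1}$ minor is a subclass of the class of graphs with no bipartite $K_{t+1}$ subdivision, so the defective chromatic number of the latter is at least the defective chromatic number of the former. By the sharpness part of Theorem~\ref{thm:weakhad} (applied with $t$ replaced by $t+1$), the defective chromatic number of the class of $K_{t+1}$-minor-free graphs is exactly $t$, which gives the lower bound of $t$.

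Combining the two bounds yields the conclusion. There is no real obstacle here; both directions are one-line reductions, and the only subtlety worth flagging in the write-up is making sure the reader sees the identification $K_{t+1}\cong K_{t}+I_{1}$ for the upper bound and the implication ``subdivision $\Rightarrow$ minor'' for the lower bound.
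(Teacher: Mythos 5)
Your proposal is correct and matches the paper's (unstated but clearly intended) argument: the upper bound is exactly the specialization of Corollary~\ref{cor:genbipweakhad} via $K_t+I_1\cong K_{t+1}$, and the lower bound follows from the sharpness in Theorem~\ref{thm:weakhad} together with the containment of $K_{t+1}$-minor-free graphs in the class with no bipartite $K_{t+1}$ subdivision. No gaps.
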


Mohar, Reed, and Wood~\cite{mohar2017colourings} studied clustered colorings of graphs with no $C_{k+1}$ minor, where $C_{k+1}$ denotes a cycle of length $k+1$. They proved that for every integer $k \geq 2$, every graph with no $C_{k+1}$ minor is $\lfloor 3 \log_2 k \rfloor$-colorable with clustering $k$, and the number of colors is asymptotically tight. Norin, Scott, Seymour, and Wood~\cite{norin2017clustered} proved that for every graph $H$,
the clustered chromatic number of the class of $H$-minor-free graphs is tied to the tree-depth of $H$, giving a partial answer to a conjecture in~\cite{ossonademendez2016}.

Liu and Oum~\cite{liu2017partitioning} proved that for every graph $H$ and every integer $\Delta$,
the class of graphs with no odd $H$ minor and maximum degree at most $\Delta$ has the clustered chromatic number at most $3$.

\begin{THM}[Liu and Oum~\cite{liu2017partitioning}]\label{partition}
  For every graph $H$ and every integer $\Delta\ge0$,
  there is $C = C(H, \Delta)$ such that for every graph $G$ with maximum degree at most $\Delta$ and no odd $H$ minor, there are
  pairwise disjoint subsets $V_1 , V_2 , V_3$ of $V(G)$ such that $V_1 \cup V_2 \cup V_3 = V(G)$ and every component of $G[V_i]$ has at most $C$ vertices for $i=1,2,3$.
\end{THM}

\section{Preliminaries}\label{sec:term}
We follow the definitions in~\cite{Diestel2010} unless stated otherwise. In this section, $G$ and $H$ always denote graphs.
For each integer $N \geq 0$, let \emph{$[N]$} be the set $\left \{1 , \dots , N \right \}$. If $N=0$, then $[N]$ is an empty set.
For a graph $H$, let $\Delta(H)$ be the maximum degree of vertices in $H$. For $S \subseteq V(H)$, let $H[S]$ be the subgraph of $H$ induced by $S$.

A subset $S \subseteq V(G)$ is \emph{stable} if no two vertices in $S$ are adjacent. Let $G \cup H$ and $G \cap H$ be graphs $(V(G)\cup V(H), E(G) \cup E(H))$ and $(V(G)\cap V(H), E(G) \cap E(H))$, respectively. A pair $(A,B)$ of subgraphs of $G$ is a \emph{separation} of $G$ if $G = A \cup B$. The \emph{order} of a separation $(A,B)$ of $G$ is $|V(A \cap B)|$. A \emph{bipartition} $\left \{X,Y \right \}$ of a bipartite graph $H$ is a set of two disjoint stable subsets such that $X \cup Y = V(G)$.

\subsection*{Coloring} For a subset $S$ of vertices of $G$ and a set $T$ of \emph{colors}, a function $\alpha : S \to T$ is a \emph{coloring} on $S$. A \emph{color class} of a coloring $\alpha : S \to T$ is $\alpha^{-1}(i)$ for some $i\in T$. A subgraph $H$ of $G$ is \emph{monochromatic} if every vertex of $H$ has the same $\alpha$ value.

A coloring $\alpha$ on $S$ is \emph{proper} if $\alpha(u) \ne \alpha(v)$ for every $uv \in E(G[S])$; equivalently, every color class of $\alpha$ is stable. For a nonnegative integer $k$, a graph $G$ is \emph{$k$-colorable} if there is a proper coloring $\alpha : V(G) \to [k]$. For integers $k,d \geq 0$, a graph $G$ is \emph{$k$-colorable} with \emph{defect $d$} if there is a coloring $\alpha : V(G) \to [k]$ such that every color class induces a graph with maximum degree at most $d$, and $G$ is \emph{$k$-colorable} with \emph{clustering $d$} if there is a coloring $\alpha : V(G) \to [k]$ such that every color class induces a subgraph with no connected components having more than $d$ vertices.

\subsection*{Paths}
A path is a graph that consists of $k$ vertices $v_1 , \dots , v_k$ for some integer $k \geq 1$ and $k-1$ edges $v_1v_2 , \dots , v_{k-1}v_k$. The vertices $v_1$ and $v_k$ are called \emph{ends}, and all other vertices $v_2 , \dots , v_{k-1}$ are called \emph{internal} vertices.

The path $P$ \emph{joins} $u,v \in V(G)$ if $u$ and $v$ are ends of $P$. For vertices $v,w \in V(P)$, $P(v,w)$ denotes the subpath of $P$ with the ends $v$ and $w$. 

A path $P$ \emph{joins} two sets $V_1 , V_2 \subseteq V(G)$ if it joins a vertex in $V_1$ and a vertex in $V_2$. The \emph{length} of a path is its number of edges. The \emph{parity} of a path $P$ is the parity of its length.

Two paths $P$ and $Q$ in $G$ are \emph{vertex-disjoint} if $V(P) \cap V(Q) = \emptyset$. They are \emph{internally disjoint}\footnote{It is called \emph{independent} in~\cite{Diestel2010}.} if
every common vertex of $P$ and $Q$ is an end of both $P$ and $Q$.

For $S \subseteq V(G)$, an \emph{$S$-path} is a path in $G$ that joins two \emph{distinct} vertices in $S$. For a coloring $\alpha : S \to \left \{1,2 \right \}$, an $S$-path $P$
from $u$ to $v$ in $G$ is \emph{parity-breaking with respect to $\alpha$} if \[|E(P)| \not\equiv \alpha(u) - \alpha(v) \pmod 2.\] For a connected bipartite subgraph $H$ of $G$ with a proper coloring $\beta : V(H) \to \left \{1,2 \right \}$, a $V(H)$-path $P$ in $G$ is \emph{parity-breaking with respect to $H$} if $P$ is parity-breaking with respect to $\beta$. This is well defined since a proper coloring of $H$ is unique up to permuting colors. We will use the following observation in Section~\ref{sec:structure}.

\begin{OBS}\label{pasting}
\mbox{}
\begin{enumerate}
\item For $S\subseteq V(G)$ and a coloring $\alpha:S\to\{1,2\}$, let $P$, $Q$ be internally disjoint $S$-paths sharing precisely one end. Then the $S$-path $P\cup Q$ is parity-breaking with respect to $\alpha$ if and only if exactly one of $P$ and $Q$ is parity-breaking with respect to $\alpha$.

\item For a connected bipartite subgraph $H$ of $G$, no path in $H$ is parity-breaking with respect to $H$.
\end{enumerate}
\end{OBS}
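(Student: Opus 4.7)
The plan for both parts is to reduce the claim to elementary arithmetic modulo $2$.

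For (1), I would denote the shared end as $w$, and let $u$, $v$ be the other end of $P$, $Q$ respectively. The hypotheses ``internally disjoint'' and ``sharing precisely one end'' force $u\neq v$ and ensure that $P\cup Q$ is an $S$-path from $u$ to $v$ of length $|E(P)|+|E(Q)|$. Rewriting the definition modulo $2$ (noting $-\alpha(w)\equiv \alpha(w)\pmod 2$), I get that $P$ is parity-breaking iff
\[
|E(P)|+\alpha(u)+\alpha(w)\equiv 1 \pmod 2,
\]
and analogously for $Q$ and for $P\cup Q$. Summing the congruences for $P$ and $Q$ eliminates the $2\alpha(w)$ term and leaves exactly the congruence for $P\cup Q$, so the parity-breaking indicator of $P\cup Q$ is the sum modulo $2$ of those of $P$ and $Q$. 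This is the claimed ``exactly one'' equivalence.

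For (2), I would first establish by induction on length that every path $R$ in a connected bipartite graph $H$ with proper $2$-coloring $\beta$ satisfies
\[
|E(R)|\equiv \beta(x)-\beta(y)\pmod 2,
\]
where $x$, $y$ are the endvertices of $R$. The base case (a single edge) holds because $\beta$ is proper, and the inductive step follows from the fact that extending a path by one edge flips both the parity of the length and the $\beta$-value of the moving endpoint. Applied to any path $P$ in $H$, this equality is exactly the negation of the parity-breaking condition, so $P$ is not parity-breaking.

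I do not expect any real obstacle here; the observation simply records that the parity-breaking indicator is a $\mathbb{Z}/2\mathbb{Z}$-valued cocycle on $S$-paths, and the payoff comes later when Section~\ref{sec:structure} uses these rules to track parities under concatenation. The only small point I would be careful about is confirming that ``sharing precisely one end'' really does imply $u\neq v$, so that $P\cup Q$ is genuinely an $S$-path (joining two \emph{distinct} vertices of $S$) rather than a closed walk.
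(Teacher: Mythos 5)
Your proof is correct; the paper states this Observation without proof, and your mod-$2$ computation (using $\alpha(u)-\alpha(v)\equiv\alpha(u)+\alpha(v)\pmod 2$ so that the shared end $w$ contributes $2\alpha(w)\equiv 0$, together with the alternation of the proper $2$-coloring along any path of $H$ for part (2)) is exactly the intended elementary verification. Your final caution is also well placed: ``sharing precisely one end'' does force the two non-shared ends to be distinct, so $P\cup Q$ is a genuine $S$-path and the definition of parity-breaking applies.
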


\subsection*{Minors} A graph $H$ is a \emph{minor} of $G$ if a graph isomorphic to $H$ can be obtained from $G$ by deleting vertices or edges and contracting edges. If there are edges $wu$ and $wv$ for some vertex $w \notin \left \{u,v \right \}$ and we contract an edge $uv$, then one of these two edges is removed after contraction to avoid parallel edges. A graph $G$ \emph{contains an $H$-minor} (or $H$ as a \emph{minor}) if $H$ is a minor of $G$.

\subsection*{Topological minors} An \emph{$H$-subdivision} is a graph obtained from $H$ by subdividing edges, where edges may be subdivided more than once. A graph $G$ \emph{contains an $H$-subdivision} (or $H$ as a \emph{topological minor}), if $G$ contains a subgraph isomorphic to an  $H$-subdivision.

Since every $H$-subdivision $H'$ is built from $H$ by replacing all edges of $H$ with internally disjoint paths called the \emph{linking paths}, there are vertices of $H'$ that correspond to vertices of $H$, which we call \emph{branch vertices}.

\subsection*{Odd minors} For $S \subseteq V(G)$ and a coloring $\alpha : S \to \left \{1,2 \right \}$, an edge $uv \in E(G[S])$ is \emph{bichromatic} if $\alpha(u) \ne \alpha(v)$, and is \emph{monochromatic} otherwise.

For graphs $G$ and $H$, $G$ contains $H$ as an \emph{odd minor} if there exist vertex-disjoint subgraphs $\left \{T_u \right \}_{u \in V(H)}$ in $G$ which are trees, and a coloring $\alpha : \bigcup_{u \in V(H)}{V(T_u)} \to \left \{1,2 \right \}$ such that for every $u \in V(H)$, every edge in $T_u$ is bichromatic, and for every edge $vw \in E(H)$, there is a monochromatic edge $e \in E(G)$ that joins $V(T_v)$ and $V(T_w)$.

We will use the following alternative definition in Section~\ref{sec:structure}.

\begin{OBS}\label{cond}
For graphs $G$ and $H$, $G$ contains $H$ as an odd minor if and only if there exist vertex-disjoint subgraphs $\left \{T_u \right \}_{u \in V(H)}$ in $G$ which are trees, and a coloring $\alpha : \bigcup_{u \in V(H)}{V(T_u)} \to \left \{1,2 \right \}$ such that 

\begin{enumerate}
\item for every $u \in V(H)$, every edge in $T_u$ is bichromatic,

\item there are internally disjoint paths $\left \{P_{e} \right \}_{e \in E(H)}$ in $G$,

\item for every $e = vw \in E(H)$, $P_{e}$ joins $V(T_v)$ and $V(T_w)$, has no internal vertex in $\bigcup_{u \in V(H)}{V(T_u)}$, and is parity-breaking with respect to $\alpha$.
\end{enumerate}
\end{OBS}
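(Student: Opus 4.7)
The plan is to prove the two directions of the equivalence, with the forward implication immediate and the reverse requiring us to absorb each parity-breaking path into one of its endpoint trees so that a monochromatic joining edge emerges at the far end.

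For the forward direction, suppose $G$ contains $H$ as an odd minor witnessed by vertex-disjoint trees $\{T_u\}_{u \in V(H)}$ in $G$, a coloring $\alpha$ making every edge of each $T_u$ bichromatic, and, for each $e = vw \in E(H)$, a monochromatic edge $f_e \in E(G)$ joining $V(T_v)$ and $V(T_w)$. Take $P_e := f_e$, viewed as a path of length $1$. Each such path has no internal vertex, so the family $\{P_e\}_{e \in E(H)}$ is trivially internally disjoint and has no internal vertex in $\bigcup_{u \in V(H)} V(T_u)$; and since the endpoints of $f_e$ have equal $\alpha$-value, $|E(P_e)| = 1 \not\equiv 0 \pmod 2$ is exactly the parity-breaking condition.

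For the reverse direction, suppose the alternative conditions hold with trees $\{T_u\}$, coloring $\alpha$, and internally disjoint parity-breaking paths $\{P_e\}$. For each $e = vw \in E(H)$, let $x_e \in V(T_v)$ and $y_e \in V(T_w)$ be the endpoints of $P_e$, and let $z_e$ be the $P_e$-neighbor of $y_e$ (with $z_e = x_e$ when $P_e$ has length $1$). Extend $\alpha$ to $\alpha'$ by coloring the vertex at distance $i$ from $x_e$ along $P_e$ with $\alpha(x_e) + i \pmod 2$. This makes every edge of $P_e$ other than possibly $z_e y_e$ bichromatic; and $\alpha'(z_e) \equiv \alpha(x_e) + |E(P_e)| - 1 \pmod 2$, which equals $\alpha(y_e)$ precisely when $|E(P_e)| \not\equiv \alpha(x_e) - \alpha(y_e) \pmod 2$, i.e., exactly when $P_e$ is parity-breaking. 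Thus $z_e y_e$ is a monochromatic edge of $G$. Now, for each $e = vw \in E(H)$, attach the subpath $P_e - y_e$ to $T_v$ at $x_e$ (fixing once and for all which endpoint's tree absorbs the path), and call the resulting subgraphs $T'_u$. By the internal disjointness of $\{P_e\}$ together with condition (3), the vertices added across different $e$ are pairwise disjoint and lie outside $\bigcup_u V(T_u)$, so the $T'_u$ are pairwise vertex-disjoint trees with every internal edge bichromatic under $\alpha'$. The edges $z_e y_e$ then provide the required monochromatic edges joining $V(T'_v)$ and $V(T'_w)$, recovering the original definition of odd minor.

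The main obstacle is the single mod-$2$ equivalence ``$\alpha'(z_e) = \alpha(y_e)$ iff $P_e$ is parity-breaking'' in the reverse direction; once this is in hand, the rest is combinatorial hygiene verifying that the absorbed paths do not collide with each other or with the trees.
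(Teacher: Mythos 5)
Your proof is correct, and since the paper states this as an Observation without proof, your argument supplies exactly the intended justification: contract the monochromatic edge into a length-one parity-breaking path for the forward direction, and absorb each parity-breaking path minus its far endpoint into one branch tree (extending the $2$-coloring alternately along the path) so that the last edge becomes monochromatic for the converse. The only cosmetic points are that $\alpha(x_e)+i \pmod 2$ should be read as alternating within the color set $\{1,2\}$, and that $\alpha'$ is understood to recolor only the internal vertices of each $P_e$ while the endpoints retain their $\alpha$-values; both are clear from context and do not affect the argument.
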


We remark that
for two graphs $G$ and $H$,
$G$ contains $H$ as an odd minor if and only if a signed graph $(G,E(G))$ contains a signed graph $(H , E(H))$ as a minor, which we will discuss in Appendix~\ref{app}.

\section{The structure of graphs with no odd clique minor}\label{sec:structure}
The proof of Theorem~\ref{thm:weakhad} is based on the fact that every graph with no $K_t$ minor has a vertex of degree at most $c_t$ for some $c_t$. In contrast to graphs with no $K_t$ minor, graphs with no odd $K_t$ minor may have arbitrarily large minimum degree; for example, complete bipartite graphs have no odd $K_3$ minor.

To prove Theorems~\ref{mainthm} and~\ref{mainthm2}, we use the following strategy similar to the one by Geelen, Gerards, Reed, Seymour, and Vetta~\cite{GGRSV2009}. If a graph $G$ has no bipartite subdivision of some graph, then we apply Corollary~\ref{cor:genbipweakhad}. Otherwise, we will show in Theorem~\ref{decomp} that $G$ contains a bipartite block after removing few vertices, which allows us to use precoloring arguments in the following section.

First of all, we describe how to find an odd $K_t$ minor in a graph $G$ if the graph contains a bipartite $K_{2t-2} + I_t$ subdivision with many vertex-disjoint parity-breaking paths between branch vertices of the subdivision.

\begin{LEM}\label{force}
For $t \geq 2$, let $G$ be a graph that contains a bipartite $K_{2t-2} + I_{t}$ subdivision~$H$, and $C$ be the set of all branch vertices of $K_{2t-2} + I_{t}$ in $H$. If there are $t-1$ vertex-disjoint parity-breaking $C$-paths with respect to $H$, then $G$ contains an odd $K_t$ minor.
\end{LEM}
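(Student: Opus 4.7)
By Observation~\ref{cond}, it suffices to construct $t$ pairwise vertex-disjoint trees $T_1, \dots, T_t$ in $G$, a proper $2$-coloring $\alpha$ of $\bigcup_i V(T_i)$, and $\binom{t}{2}$ pairwise internally disjoint paths $\{P_e\}_{e \in E(K_t)}$, where for $e = \{i,j\}$ the path $P_{ij}$ joins $T_i$ and $T_j$, has no internal vertex in any $T_k$, and is parity-breaking with respect to $\alpha$. Write $X = \{x_1, \dots, x_{2t-2}\}$ and $Y = \{y_1, \dots, y_t\}$ for the branch vertices of the $K_{2t-2}$- and $I_t$-sides of $H$, so $C = X \cup Y$. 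Let $\beta : V(H) \to \{0,1\}$ denote the bipartite $2$-coloring of $H$, and let $R_1, \dots, R_{t-1}$ be the vertex-disjoint parity-breaking $C$-paths provided by hypothesis.

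The guiding observation is the following. Assign each $u \in \bigcup_i V(T_i) \cap V(H)$ its \emph{swap indicator} $s(u) := \alpha(u) + \beta(u) \pmod{2}$. Then for any $H$-path $P$ with endpoints $u, v$ in $\bigcup_i V(T_i)$, $P$ is parity-breaking with respect to $\alpha$ if and only if $s(u) \neq s(v)$. If $T_i$ is a subgraph of $H$, then $s$ is constant on $T_i$; but if a parity-breaking subpath (such as some $R_k$) is inserted into $T_i$, then $s$ takes both values, one on each side of that subpath. My strategy is to use the $t-1$ paths $R_1, \dots, R_{t-1}$ to equip $t-1$ of the trees with two distinguished $X$-vertices of opposite swap indicators, so that every pair of trees can be connected through $H$ by a parity-breaking path.

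After a preprocessing step (described below) that ensures each $R_k$ has both endpoints in $X$ and its interior is disjoint from every linking path of $H$ used in the construction, the vertex-disjointness of $R_1, \dots, R_{t-1}$ partitions $X$ into $t-1$ pairs $\{a_k, b_k\}$ that exhaust all $2t-2$ vertices. For each $k \le t-1$, define $T_k$ to be the path from $y_k$ through $a_k$ to $b_k$ obtained by concatenating the $H$-linking path from $y_k$ to $a_k$ with $R_k$, and set $T_t := \{y_t\}$. Choose the proper $2$-coloring $\alpha$ on each tree so that $\alpha(y_k) = \beta(y_k)$ for every $k$; then the bipartite segment $y_k$-to-$a_k$ gives $s(a_k) = 0$, the parity-breaking $R_k$ flips the indicator to $s(b_k) = 1$, and $s(y_t) = 0$. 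For each edge $\{i,j\} \subseteq [t]$ with $i < j$, let $P_{ij}$ be the subdivided $K_{2t-2}$-edge of $H$ joining $b_i$ and $a_j$ when $j < t$, and the linking path of $H$ joining $b_i$ and $y_t$ when $j = t$. Distinct edges of $K_t$ select distinct linking paths of the subdivision $H$, so the $P_{ij}$'s are pairwise internally disjoint and have interiors disjoint from every tree, and each $P_{ij}$ joins endpoints of opposite swap indicators, hence is parity-breaking by the guiding observation.

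The technical heart of the argument, which I expect to be the main obstacle, is justifying the preprocessing step. If $R_k$ has an endpoint in $Y$, concatenating $R_k$ with an $H$-linking path at that endpoint produces, by Observation~\ref{pasting}(1), a new parity-breaking $C$-path whose endpoint has been shifted into $X$; analogously, a crossing between $R_k$ and the interior of an $H$-linking path can be removed by swapping in a suitable arc of that linking path. The subtlety is that these modifications must be performed consistently across the $R_k$'s so as to preserve both mutual vertex-disjointness and the parity-breaking property; in particular, one must avoid pushing a $Y$-vertex into the interior of some $R_k$, since that vertex is reserved for a different branch set. I expect that an extremal choice of $R_1, \dots, R_{t-1}$, for instance minimizing the sum of lengths subject to vertex-disjointness and parity-breaking, together with repeated applications of Observation~\ref{pasting}(1), will rule out all such obstructions.
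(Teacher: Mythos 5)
Your overall architecture is close to the paper's: choose the paths extremally, attach each one to a linking path ending at an unused branch vertex to form a tree, and connect the trees through $H$, tracking parity by comparing $\alpha$ with the bipartition coloring $\beta$ (your ``swap indicator''). The gap is exactly where you locate it, and it is not a routine technicality: the preprocessing claim you defer is the actual content of the lemma, and in the strong form you need it (every $R_k$ has \emph{both} endpoints in $X$, and every $R_k$ is internally disjoint from \emph{every} linking path used later, including all $\binom{2t-2}{2}$ linking paths between $X$-vertices that you use as connectors $P_{ij}$) there is no evident argument that an extremal choice delivers it. Shifting an endpoint $y\in Y$ of $R_k$ into $X$ requires appending a linking path $Q_{y,x}$ that avoids all other $R_m$'s and all other appended segments; since $y$ is a \emph{used} branch vertex, the minimality argument that controls intersections (the paper's Claim~1) does not apply to $Q_{y,x}$, and one can have several paths each needing two such extensions competing for the same $X$-vertices and the same linking paths. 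You have not shown these obstructions can be cleared, and it is doubtful they always can in the form stated.

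The paper sidesteps both difficulties rather than solving them. First, it never forces endpoints into $X$: it classifies endpoints as Type-A (in $K_{2t-2}$) or Type-B (in $I_t$) and proves a counting inequality showing that among the $t$ unused branch vertices one can always pick a Type-A hub $z_i$ for every path having a Type-B endpoint, so that $z_i$ is adjacent in $K_{2t-2}+I_t$ to both ends of $P_i$; the trees are then $P_i\cup Q_{z_i,y_i}$ or $P_i\cup Q_{z_i,x_i}$. Second, it does not require the connectors to be disjoint from the $P_i$'s: the extremal choice yields only the weaker Claim~1, that a linking path $Q_{u,v}$ with $u$ unused meets at most one $P\in\mathcal P$ and only in a terminal subpath containing $v$, and the connecting path $P_{i,j}$ is then \emph{truncated} at the first vertex $a_{i,j}$ where it meets $P_i$, with the parity re-verified at $a_{i,j}$ via Observation~\ref{pasting}. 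If you want to repair your proof, the fix is essentially to adopt these two devices: give up on relocating endpoints and instead route each pair of trees through a suitably chosen unused branch vertex, and allow truncated connectors whose parity you check at the truncation point. As written, the proposal does not constitute a proof.
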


\begin{proof}
For convenience, we identify each vertex in $C$ with its corresponding vertex of $K_{2t-2} + I_t$.

For a collection $\mathcal Q$ of paths, let $\ell(\mathcal{Q}) = \sum_{P \in \mathcal{Q}}{|E(P)|}$. Let $\mathcal{P}$ be a collection of $t-1$ vertex-disjoint parity-breaking $C$-paths with respect to $H$, satisfying the following.

\begin{enumerate}[(a)]
\item $\sum_{P \in \mathcal{P}}{|E(P) \setminus E(H)|}$ is minimum, and
\item  subject to (a), $\ell (\mathcal P)$ is minimum.
\end{enumerate}

Note that every vertex in $C$ is not an internal vertex of a path in $\mathcal{P}$. To see this, if a vertex in $C$ is an internal vertex of a path $Q \in \mathcal{P}$, then $Q$ contains a proper subpath $Q'$ that is a parity-breaking $C$-path with respect to $H$. For $\mathcal{Q} := (\mathcal{P} \setminus \left \{Q \right \}) \cup \left \{Q' \right \}$ it follows that $\sum_{P \in \mathcal{Q}}{|E(P) \setminus E(H)|} \leq \sum_{P \in \mathcal{P}}{|E(P) \setminus E(H)|}$ and $\ell(\mathcal{Q}) < \ell(\mathcal{P})$, contradicting our choice of $\mathcal{P}$.

For distinct $u,v \in C$, if $uv \in E(K_{2t-2} + I_t)$, then let $Q_{u,v}$ be the linking path from $u$ to $v$ in $H$. If $uv \notin E(K_{2t-2} + I_t)$, then let $Q_{u,v}$ be a graph with two vertices $u$ and $v$ and no edges. Note that there are $2t-2$ branch vertices that appear in paths in $\mathcal{P}$, and $t$ unused branch vertices. Let $C_0 \subseteq C$ be the set of those unused $t$ branch vertices.

\begin{CLAIM}\label{path} Let $u$, $v$ be distinct vertices in $C$.
\begin{enumerate}
\item If $u, v \in C_0$, then no path $P \in \mathcal{P}$ intersects $Q_{u,v}$.%
\footnote{Two subgraphs \emph{intersect}  if they share at least one common vertex.}
\item If $u \in C_0$, then $Q_{u,v}$ intersects at most one path $P$ in $\mathcal{P}$, and if so, then the intersection is a subpath of both $Q_{u,v}$ and $P$ and contains $v$.
\end{enumerate}
\end{CLAIM}

\begin{proof}[Proof of Claim~\ref{path}]
\renewcommand{\qedsymbol}{\qedblack}
We may assume $uv \in E(K_{2t-2} + I_t)$, otherwise $V(Q_{u,v}) \subseteq C$ and the claim is trivial. Let $u \in C_0$, and suppose $Q_{u,v}$ intersects a path in $\mathcal{P}$. Since no path in $\mathcal{P}$ intersects $u$, starting from $u$ and following $Q_{u,v}$ we arrive at the first vertex $w \in V(Q_{u,v})$ on some path $P \in \mathcal{P}$.

Write $P = A \cup B$, where $A$ and $B$ are two subpaths of $P$ with the only common vertex $w$. Since $w \in V(H)$, either $A$ or $B$ is parity-breaking with respect to $H$, and we may assume $A$ is parity-breaking with respect to $H$. By Observation~\ref{pasting}, a path $R = A \cup Q_{u,v}(w,u)$ is parity-breaking with respect to $H$, and it intersects no path in $\mathcal{P}$ other than $P$. Therefore, we conclude that $(\mathcal{P} \setminus \left \{P \right \} ) \cup \left \{R \right \}$ is a set of $t-1$ vertex-disjoint parity-breaking $C$-paths with respect to $H$. By our assumption on $\mathcal{P}$, $P$ does not have more edges not in $H$ than $R$. This implies $E(B) \subseteq E(H)$, and thus $B = Q_{u,v}(w , v)$ since one of the ends of $B$ is in $C$ and no path in $\mathcal{P}$ intersects $u$. Note that $A$ intersects $Q_{u,v}$ only at $w$, since $B = Q_{u,v}(w,v)$.

Since $P$ and $Q_{u,v}$ share a common subpath from $w$ to $v$ and $w$ is the only vertex that belongs to both $Q_{u,v}(w,u)$ and some path in $\mathcal{P}$, $P$ is the only path that intersects $Q_{u,v}$. In particular, $B = P \cap Q_{u,v}$ is a path that contains $v$, and $v \notin C_0$.
\end{proof}

Let $\mathcal{P} = \left \{P_1 , \dots , P_{t-1} \right \}$, and for $1 \leq i \leq t-1$, let $x_i$ and $y_i$ be the ends of $P_i$. Let $C_1 = \left \{x_1 , \dots , x_{t-1} \right \}$ and $C_2 = \left \{y_1 , \dots , y_{t-1} \right \}$.

For $v \in C$, if $v$ corresponds to a vertex in the subgraph $K_{2t-2}$ of $K_{2t-2} + I_{t}$, then we call $v$ \emph{Type-A}. Otherwise we call $v$ \emph{Type-B}. Let $q$ be the number of $i$'s ($1 \leq i \leq t-1$) such that both $x_i$ and $y_i$ are Type-A, and $r$ be the number of $i$'s such that exactly one of $x_i$ and $y_i$ is Type-A, and $s = t-1 - q - r$. Then there are $(2t-2) - (2q+r)$ vertices of Type-A in $C_0$. Since $q+r+s = t-1$, it follows that $(2t-2) - (2q+r) \geq r+s$. Therefore, the number of vertices in $C_0$ of Type-A is at least $r+s$. Thus, we choose an ordering  $z_1 , \dots , z_t$ of the vertices in $C_0$ such that for $1 \leq i \leq t-1$ if $x_i$ or $y_i$ is Type-B, then $z_i$ is a vertex of Type-A.

In summary, $z_i x_i , z_i y_i \in E(K_{2t-2} + I_t)$ for $1 \leq i \leq t-1$. Equivalently, if $z_i$ is Type-B, then both $x_i$ and $y_i$ are Type-A.

Let $\beta : V(H) \to \left \{1,2 \right \}$ be a proper coloring of $H$ unique up to permuting colors. In order to find an odd $K_t$ minor, we now aim to construct vertex-disjoint subgraphs $M_1 , \dots , M_t$ in $G$ which are trees and a coloring $\alpha : \bigcup_{i=1}^{t}{V(M_i)} \to \left \{1,2 \right \}$ as in Observation~\ref{cond}. 

For $1 \leq i \leq t-1$, let $M_i = P_i \cup Q_{z_i , y_i}$ if $z_i$ is Type-A, and $M_i = P_i \cup Q_{z_i , x_i}$ if $z_i$ is Type-B. Let $M_t$ be a graph with the only vertex $z_t$. 
If $i<t$, then by Claim~\ref{path}, $P_i\cap Q_{z_i,y_i}$ or $P_i\cap Q_{z_i,x_i}$ is a subpath of $P_i$ and so $M_i$ is a tree with maximum degree at most $3$ and at most one vertex of degree~$3$.
We choose a coloring $\alpha:\bigcup_{i=1}^t V(M_i)\to \{1,2\}$ such that 
\begin{enumerate}
\item $\alpha$ on $V(M_i)$ is a proper coloring of $M_i$ and $\alpha(x_i)=\beta(x_i)$ for all $1\le i \le t-1$,  and
\item $\alpha(z_t) = \beta(z_t)$ if and only if $z_t$ is Type-B.
\end{enumerate}
 Observation~\ref{pasting} implies that, for $1\le i\le t-1$, $\alpha(y_i) \ne \beta(y_i)$ since $P_i$ is parity-breaking with respect to $H$ and $\alpha(z_i) = \beta(z_i)$ if and only if $z_i$ is Type-B. 

For $1 \leq i < j \leq t$, we are now ready to construct a path $P_{i,j}$ joining $V(M_i)$ and $V(M_j)$ that is parity-breaking with respect to $\alpha$ and satisfies Observation~\ref{cond}. This will show that $G$ contains an odd $K_t$ minor. The structure of $P_{i,j}$ depends on the types of $z_i$ and $z_j$.
\smallskip

\noindent
\textbf{Case 1}. Both $z_i$ and $z_j$ are Type-A.

Following $Q_{z_j , x_i}$ from $z_j$ to $x_i$, we arrive at the first vertex $a_{i,j}$ in $V(P_i) \cap V(Q_{z_j , x_i})$. By Claim~\ref{path}, $Q_{z_j , x_i}$ and $P_i$ share the subpath from $a_{i,j}$ to $x_i$. Since $P_i (x_i , a_{i,j})$ is in $H$ and $\alpha(x_i) = \beta(x_i)$, it follows that $\alpha(a_{i,j}) = \beta(a_{i,j})$ by Observation~\ref{pasting}. Let us define $P_{i,j} = Q_{z_j , x_i} (z_j , a_{i,j})$. Since $\alpha(z_j) \ne \beta(z_j)$, $\alpha(a_{i,j}) = \beta(a_{i,j})$ and $P_{i,j}$ is a subpath of $Q_{z_j , x_i}$, we conclude that $P_{i,j}$ is parity-breaking with respect to $\alpha$ by Observation~\ref{pasting}.
\smallskip

\noindent
\textbf{Case 2}. $z_i$ and $z_j$ are of different types.

Let us define $P_{i,j} := Q_{z_i , z_j}$. By Claim~\ref{path}, $P_{i,j}$ intersects no path in $\mathcal{P}$. Since $\alpha(z_i) = \beta(z_i)$, $\alpha(z_j) \ne \beta(z_j)$, and $Q_{z_i , z_j}$ is in $H$, $P_{i,j}$ is parity-breaking with respect to $\alpha$ by Observation~\ref{pasting}.\smallskip

\noindent
\textbf{Case 3}. Both $z_i$ and $z_j$ are Type-B.

Since $z_i$ is Type-B, $y_i$ is Type-A. Following $Q_{z_j , y_i}$ from $z_j$ to $y_i$, we arrive at the first vertex $a_{i,j}$ in $V(P_i) \cap V(Q_{z_j , y_i})$. Claim~\ref{path} implies that $Q_{z_j , y_i}$ and $P_i$ share the subpath from $a_{i,j}$ to $y_i$. Since $P_i (y_i , a_{i,j})$ is in $H$ and $\alpha(y_i) \ne \beta(y_i)$, it follows that $\alpha(a_{i,j}) \ne \beta(a_{i,j})$. Let us define $P_{i,j} := Q_{z_j , y_i} (z_j , a_{i,j})$. Since $\alpha(z_j) = \beta(z_j)$, $\alpha(a_{i,j}) \ne \beta(a_{i,j})$ and $P_{i,j}$ is in $H$, $P_{i,j}$ is parity-breaking with respect to $\alpha$ by Observation~\ref{pasting}.
\end{proof}

We use the following lemma, which asserts that the family of $S$-paths of odd length satisfies the Erd\H{o}s-P\'{o}sa property.

\begin{LEM}[Geelen, Gerards, Reed, Seymour, and Vetta~{\cite[Lemma 11]{GGRSV2009}}]\label{oddpath}
Let $G$ be a graph and $S \subseteq V(G)$. For every integer $\ell \geq 1$, $G$ contains $\ell$ vertex-disjoint $S$-paths of odd length, or there is $X \subseteq V(G)$ with $|X| \leq 2 \ell - 2$ such that $G \setminus X$ contains no $S$-path of odd length.
\end{LEM}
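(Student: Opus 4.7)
The plan is to argue by induction on $\ell$. The base case $\ell = 1$ is immediate: either $G$ has no odd $S$-path and $X = \emptyset$ suffices, or any single odd $S$-path supplies the required family.

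For the inductive step, assume the statement holds for $\ell - 1$, and let $k$ denote the maximum number of vertex-disjoint odd $S$-paths in $G$. If $k \geq \ell$, we are done directly. If $k \leq \ell - 2$, then applying the inductive hypothesis with parameter $\ell - 1$ yields a set $X$ with $\abs{X} \leq 2(\ell - 1) - 2 = 2\ell - 4 \leq 2\ell - 2$ that hits every odd $S$-path, which finishes this case. So the remaining case is $k = \ell - 1$; fix a family $\mathcal{P} = \{P_1, \dots, P_{\ell-1}\}$ of vertex-disjoint odd $S$-paths chosen to minimize $\sum_{i}\abs{E(P_i)}$. The candidate hitting set is $X := \{u_i, v_i : 1 \leq i \leq \ell - 1\}$, where $u_i, v_i$ are the endpoints of $P_i$, so $\abs{X} \leq 2(\ell - 1) = 2\ell - 2$.

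The key step is then to show that every odd $S$-path meets $X$. Suppose for contradiction that some odd $S$-path $Q$ avoids $X$. By maximality of $\mathcal{P}$, the path $Q$ must meet some $P_i$. Traverse $Q$ from one endpoint until reaching the first vertex $x$ on $\bigcup_{i}V(P_i)$, say $x \in V(P_i)$; this splits $P_i$ at $x$ into two subpaths $P_i(u_i, x)$ and $P_i(x, v_i)$. By a parity bookkeeping analogous to Observation~\ref{pasting}, exactly one of the concatenations $Q(u, x) \cdot P_i(x, u_i)$ and $Q(u, x) \cdot P_i(x, v_i)$ is an odd $S$-path. Using that $Q$'s endpoints are in $S \setminus X$ and pushing this local exchange along $Q$, I would iteratively rebuild $\mathcal{P}$ into either (i)~a family of $\ell$ vertex-disjoint odd $S$-paths, contradicting $k = \ell - 1$, or (ii)~a family of $\ell - 1$ vertex-disjoint odd $S$-paths with strictly smaller total edge count, contradicting the choice of $\mathcal{P}$. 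Either way we obtain a contradiction, establishing the claim.

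The principal difficulty is the exchange argument in the last step: one has to control not only the parities of all modified paths but also their pairwise disjointness from the other $P_j$'s as $Q$ may re-enter $\bigcup_j V(P_j)$ multiple times. A cleaner alternative would be to lift the instance to the bipartite double cover $\tilde{G}$ (identifying the two copies of each $s \in S$), so that odd $S$-paths in $G$ correspond to ordinary $S$-paths in $\tilde{G}$, and then invoke Mader's $S$-path theorem; however, one would have to reconcile disjointness in $\tilde{G}$ with disjointness in $G$, which adds its own bookkeeping. The direct inductive exchange outlined above seems to give the sharp bound $2\ell - 2$ most transparently.
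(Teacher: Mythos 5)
This lemma is not proved in the paper at all: it is quoted verbatim from Geelen, Gerards, Reed, Seymour, and Vetta, where it is in turn deduced from the theorem of Chudnovsky, Geelen, Gerards, Goddyn, Lohman, and Seymour on packing non-zero $A$-paths in group-labelled graphs (applied to $\mathbb{Z}_2$ with every edge labelled $1$). That is a substantial result with no short proof known, so you should expect any two-page argument to be hiding a serious difficulty somewhere.

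The difficulty is exactly your key claim in the case $k=\ell-1$: it is false that the $2\ell-2$ endpoints of a maximum packing of minimum total length hit every odd $S$-path. Take $V(G)=\{a,b,c,d,x,b',d'\}$ with edges $ax$, $xb'$, $b'b$, $cx$, $xd'$, $d'd$, and $S=\{a,b,c,d\}$. Every $S$-path passes through $x$, so no two are vertex-disjoint; the odd $S$-paths are precisely $axb'b$, $axd'd$, $cxb'b$, $cxd'd$ (length $3$ each), while the $a$--$c$ and $b$--$d$ paths are even. With $\ell=2$ the maximum packing has size $\ell-1=1$, every minimum-length maximum packing is a single path of length $3$, and its endpoint pair fails to meet the ``opposite'' odd path (e.g.\ $X=\{a,b\}$ misses $cxd'd$); the lemma holds here only because one may take $X=\{x\}$, an \emph{internal} vertex. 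Your exchange argument cannot repair this: following $Q=cxd'd$ to its first contact $x$ with $P_1=axb'b$, the unique odd concatenation is $cxb'b$, which has the same length as $P_1$, so the iteration merely cycles among equal-length packings and never yields either a larger packing or a shorter one. (The bipartite double cover route has its own problem, as you note: vertex-disjointness in $\tilde G$ does not project to vertex-disjointness in $G$.) So the proposal has a genuine gap at its central step; the honest course here is to cite the result, as the paper does.
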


\begin{OBS}\label{smallsubdiv}
Let $G$ and $H$ be graphs and $X \subseteq V(G)$. If $G$ contains an $H$-subdivision $K$, then $G \setminus X$ contains an $H'$-subdivision $K'$ such that $H' = H \setminus Y$ for some $Y \subseteq V(H)$ with $|Y| \leq |X|$ and $K'$ is a subgraph of $K$.
\end{OBS}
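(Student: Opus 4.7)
The plan is to classify each vertex $x \in X$ according to its role in the subdivision $K$ and then delete a single corresponding vertex of $H$ for each such $x$ to build $Y$. Recall that in an $H$-subdivision $K$, each vertex of $V(K)$ is either a branch vertex (corresponding to some $h \in V(H)$) or an internal vertex of exactly one linking path (corresponding to some edge $e \in E(H)$), since the linking paths are internally disjoint. So for each $x \in X \cap V(K)$ exactly one of two cases occurs: $x$ is the branch vertex $b_h$ for some unique $h \in V(H)$, or $x$ is internal to a unique linking path $P_e$ with $e = uv \in E(H)$.

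I would then define $Y \subseteq V(H)$ as follows. For each $x \in X$ with $x \notin V(K)$, contribute nothing to $Y$. For each $x \in X$ that is a branch vertex $b_h$, add $h$ to $Y$. For each $x \in X$ that is internal to $P_{uv}$, add (arbitrarily) $u$ to $Y$. Since we add at most one element of $V(H)$ per element of $X$, we obtain $\lvert Y\rvert \leq \lvert X\rvert$.

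Next I would exhibit the desired subdivision $K'$ of $H' := H \setminus Y$ inside $K \setminus X$. Take the branch vertices of $K'$ to be $\{b_h : h \in V(H')\}$ and the linking paths to be $\{P_e : e \in E(H')\}$. To check that this works, note that each $b_h$ with $h \in V(H')$ lies in $V(K) \setminus X$, for if $b_h$ were in $X$ then we would have placed $h$ into $Y$, contradicting $h \in V(H')$. Similarly, for an edge $e = uv \in E(H')$, the linking path $P_e$ contains no internal vertex of $X$, because any such internal vertex would have caused one of $u$ or $v$ to be added to $Y$, again contradicting $e \in E(H')$. Its endpoints $b_u$ and $b_v$ are outside $X$ by the previous sentence. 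Hence $K'$ is a subgraph of $K \setminus X \subseteq G \setminus X$, and it is by construction an $H'$-subdivision.

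There is no real obstacle here; the only subtle point is to observe that in a subdivision the linking paths are internally disjoint, so the assignment of each $x \in X \cap V(K)$ to a unique element of $V(H) \cup E(H)$ is well defined and the count $\lvert Y\rvert \leq \lvert X\rvert$ is immediate.
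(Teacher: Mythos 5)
Your proof is correct and rests on the same idea as the paper's: each vertex of $K$ is either a branch vertex or an internal vertex of a unique linking path, so it can be charged to at most one vertex of $H$. The paper merely states the single-vertex case and iterates it over $X$, whereas you handle all of $X$ in one pass; the two arguments are essentially identical.
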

\begin{proof}
It is easy to see that if $G$ has an $H$-subdivision $K$ and $v$ is a vertex of $K$,
then there is a vertex $w$ of $H$ such that $G\setminus v$ has a $(H\setminus w)$-subdivision.
\end{proof}

The following lemma is a variation of~\cite[Lemma 15]{GGRSV2009}.

\begin{LEM}\label{paritybreak}
Let $\ell$ be a positive integer and $G$ be a graph. Let $H$ be a bipartite $K_s + I_t$ subdivision in $G$ for integers $s \geq 2 \ell$ and $t \geq 1$, and $C$ be the set of all branch vertices in $H$. At least one of the following holds.
\begin{itemize}
\item
There exists $X \subseteq V(G)$ with $|X| \leq 2\ell-2$ such that $G-X$ has a bipartite block $U$ that contains at least $s + t - |X|$ vertices in $C \setminus X$ and all linking paths in $H$ between them.
\item
$G$ has $\ell$ vertex-disjoint parity-breaking $C$-paths with respect to $H$.
\end{itemize}
\end{LEM}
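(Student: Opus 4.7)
The plan is to reduce parity-breaking $C$-paths with respect to $H$ to odd $S$-paths in an auxiliary graph so that Lemma~\ref{oddpath} applies, and then combine the hitting-set outcome with Observation~\ref{smallsubdiv} and a standard fact about $2$-connected non-bipartite graphs. Let $\beta\colon V(H)\to\{1,2\}$ be the proper $2$-coloring of the bipartite subdivision $H$, and set $V_i := C\cap\beta^{-1}(i)$ for $i=1,2$. Form $G'$ from $G$ by attaching, for each $v\in V_2$, a new pendant vertex $v'$ adjacent only to $v$, and let $S' := V_1 \cup \{v' : v\in V_2\}$. A short case analysis on the colors of the two endpoints shows that prepending/appending the pendant edge at each $V_2$-endpoint defines a parity-flipping bijection between parity-breaking $C$-paths in $G$ (with respect to $H$) and odd $S'$-paths in $G'$; vertex-disjointness is preserved because pendants have degree~$1$.

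I would then apply Lemma~\ref{oddpath} to $(G',S',\ell)$. If $G'$ has $\ell$ vertex-disjoint odd $S'$-paths, stripping the pendant endpoints yields $\ell$ vertex-disjoint parity-breaking $C$-paths in $G$, giving the second conclusion. Otherwise there is $X''\subseteq V(G')$ with $|X''|\le 2\ell-2$ such that $G'\setminus X''$ has no odd $S'$-path. I replace each pendant $v'\in X''$ by its base $v\in V_2$ to obtain $X\subseteq V(G)$ with $|X|\le 2\ell-2$; because deleting $v$ from $G$ isolates $v'$ in $G'\setminus X$, no $S'$-path in $G'\setminus X$ uses $v'$, so the property "no odd $S'$-path" persists in $G'\setminus X$. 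By the correspondence, $G\setminus X$ contains no parity-breaking $C$-path with respect to $H$.

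Next I invoke Observation~\ref{smallsubdiv} with base graph $K_s+I_t$: $G\setminus X$ contains a subdivision $K'\subseteq H$ of $(K_s+I_t)\setminus Y$ for some $Y\subseteq V(K_s+I_t)$ with $|Y|\le |X|$. The branch-vertex set $C''$ of $K'$ satisfies $C''\subseteq C\setminus X$ and $|C''|=s+t-|Y|\ge s+t-|X|$, and because $Y$ is chosen precisely so that no surviving linking path is broken (an internal vertex of $X$ on a linking path is compensated by removing one of its branch endpoints from $Y$), the linking paths of $K'$ between vertices of $C''$ are exactly the linking paths of $H$ between them. Since $s\ge 2\ell$ and $|Y|\le 2\ell-2$, at least two "$K$-side" vertices survive, so $(K_s+I_t)\setminus Y = K_{s'}+I_{t'}$ with $s'\ge 2$ is $2$-connected; hence $K'$ is $2$-connected and sits inside a single block $U$ of $G\setminus X$, which therefore contains $\ge s+t-|X|$ branch vertices of $C\setminus X$ together with all the required linking paths of $H$.

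To finish, I would argue that $U$ is bipartite. Assume not. Then $U$ is $2$-connected and contains an odd cycle, and a standard open-ear-decomposition argument (building $U$ from an odd cycle by adding ears and propagating the two-parity property) shows that in any $2$-connected non-bipartite graph every pair of vertices is joined by paths of both parities. Pick $c_1,c_2\in C''$; some $c_1$-$c_2$ path $R$ in $U$ has length $\not\equiv \beta(c_1)-\beta(c_2)\pmod 2$. Decomposing $R$ at its internal vertices in $C$ and telescoping the $\beta$-differences, at least one of the resulting sub-$C$-paths must be parity-breaking with respect to $H$ and lies in $G\setminus X$, contradicting the previous paragraph. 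The main obstacle I expect is the clean set-up of the parity correspondence in the first paragraph (so it behaves uniformly across the four endpoint-color combinations) together with the pendant-substitution argument verifying that the hitting-set property transfers back from $G'$ to $G$; the bipartite-extension step reduces to the standard $2$-connected non-bipartite fact.
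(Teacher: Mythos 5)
Your proposal is correct and follows essentially the same route as the paper: reduce parity-breaking $C$-paths to odd $S$-paths so that Lemma~\ref{oddpath} applies (the paper subdivides the edges at the branch vertices of one color class where you attach pendants --- an equivalent gadget), then use Observation~\ref{smallsubdiv} and the $2$-connectivity of $K_{s'}+I_{t'}$ to locate the block, and finally derive bipartiteness from the absence of parity-breaking paths (the paper proves the ``both parities'' fact directly via Menger applied to an odd cycle, where you cite it as standard). The only point worth tightening is that $s'\ge 2$ alone does not give $2$-connectivity; you also need $s'+t'\ge 3$, which follows from $s+t-|Y|\ge 2\ell+1-(2\ell-2)=3$.
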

\begin{proof}
(1) \emph{We claim that either there are $\ell$ vertex-disjoint parity-breaking $C$-paths in $G$ with respect to $H$, or there is $X \subseteq V(G)$ with $|X| \leq 2\ell - 2$ such that $G \setminus X$ contains no parity-breaking $C$-path with respect to $H$.}

Let $\left \{L , R \right \}$ be the unique bipartition of $H$. 
Without loss of generality, we may assume that every linking path corresponding to an edge in $K_s + I_t$ has even length, because otherwise, for every branch vertex $v \in C \cap L$, we subdivide each edge $e \in E(G)$ incident with $v$ once. This gives an $H$-subdivision $H'$ and a $G$-subdivision $G'$ such that $H'$ is a bipartite subgraph of $G'$. We may assume $V(H) \subseteq V(H')$ and $V(G) \subseteq V(G')$, and then  every vertex in $V(G') \setminus V(G)$ has degree 2. Note that all vertices in $C$ are in the same part of the bipartition of $H'$, and thus every path in $H'$ between vertices in $C$ has even length. It is easy to check the following.
\begin{itemize}
\item A $C$-path of odd length in $G'$ corresponds to a parity-breaking $C$-path in $G$ with respect to~$H$.
\item If there is $X' \subseteq V(G')$ with $|X'| \leq 2 \ell - 2$ such that $G' \setminus X'$ contains no $C$-path of odd length, then we may assume $X' \subseteq V(G)$ since every vertex in $V(G') \setminus V(G)$ has degree 2. 
\end{itemize}

Lemma~\ref{oddpath} claims that either $G'$ contains $\ell$ vertex-disjoint $C$-paths of odd length, or there is $X' \subseteq V(G')$ with $|X'| \leq 2\ell-2$ such that $G' \setminus X'$ contains no $C$-path of odd length. This proves (1).
\medskip

Suppose $G$ contains no $\ell$ vertex-disjoint parity-breaking $C$-paths with respect to $H$. By (1), there is $X \subseteq V(G)$ with $|X| \leq 2\ell - 2$ such that $G \setminus X$ contains no parity-breaking $C$-path with respect to $H$. For convenience, we identify each vertex in $C$ with  its corresponding vertex of $K_s + I_t$. For distinct $u,v \in C$, if $uv \in E(K_s + I_t)$ then let $Q_{u,v}$  be the linking path from $u$ to $v$ in $H$. If $uv \notin E(K_s + I_t)$, then let $Q_{u,v}$ be a graph with two vertices $u$ and $v$ and no edges.
\medskip

(2) \emph{We claim that there is a block $U$ in $G \setminus X$ containing at least $s + t - |X|$ vertices in $C \setminus X$ and all linking paths in $H$ between them.}

By Observation~\ref{smallsubdiv}, $G \setminus X$ contains a $(K_a + I_b)$-subdivision $K$ such that $K_a + I_b = (K_s + I_t) \setminus Y$ for some $Y \subseteq V(K_s + I_t)$ with $|Y| \leq |X|$ and $K$ is a subgraph of $H$. Let $T$ be the set of all branch vertices in $K$, where $|T| \geq a+b = s+t - |Y| \geq s+t - |X|$.

Since $a \geq s - |Y| \geq 2$ and $a + b = s+t - |Y| \geq 3$, $K_a + I_b$ is 2-connected. Therefore, $K$ is 2-connected and all vertices in $T$ are in the same block of $G \setminus X$.
\medskip

(3) \emph{We claim that $U$ is bipartite.}

Suppose $U$ contains an odd-length cycle $D$. For two distinct vertices $u,v \in C \cap V(U)$, there are two vertex-disjoint paths in $U$ joining $\left \{u,v \right \}$ and $V(D)$ by Menger's theorem. Using these paths, we obtain both an odd-length path and an even-length path from $u$ to $v$ in $U$. One of those paths is a parity-breaking $C$-path with respect to $H$, contradicting that $G \setminus X$ has no parity-breaking $C$-path with respect to $H$.
\end{proof}

Now we are ready to prove the main theorem of this section.

\begin{THM}\label{decomp}
Let $t \geq 2$ be an integer, and $G$ be a graph. If $G$ contains no odd $K_t$ minor and contains a bipartite $K_{2t-2} + I_{t}$ subdivision, then there is $X \subseteq V(G)$ with $|X| \leq 2t-4$ such that $G \setminus X$ contains a bipartite block $U$ having at least $t+3$ vertices.
\end{THM}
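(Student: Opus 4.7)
The plan is to invoke Lemma~\ref{paritybreak} with $s = 2t-2$ and $\ell = t-1$ (noting that $s \geq 2\ell$ holds with equality) to the given bipartite $K_{2t-2} + I_{t}$ subdivision $H$, and then combine it with Lemma~\ref{force}. If Lemma~\ref{paritybreak} produces $t-1$ vertex-disjoint parity-breaking $C$-paths with respect to $H$, then Lemma~\ref{force} immediately yields an odd $K_t$ minor in $G$, contradicting the hypothesis. So the other conclusion must hold: there exists $X \subseteq V(G)$ with $\abs{X} \leq 2\ell - 2 = 2t-4$ such that $G \setminus X$ has a bipartite block $U$ containing at least $s + t - \abs{X} = 3t - 2 - \abs{X}$ branch vertices of $C\setminus X$ together with all linking paths of $H$ between them.

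It remains to show $\abs{V(U)} \geq t+3$. Let $a$ be the number of surviving branch vertices of the $K_{2t-2}$-part and $b$ the number of surviving branch vertices of the $I_t$-part, so $a \leq 2t-2$, $b \leq t$, and $a + b \geq 3t - 2 - \abs{X}$. If $\abs{X} \leq 2t-5$, then $a + b \geq t+3$ and we are done just from counting branch vertices. Otherwise $\abs{X} = 2t-4$ and $a + b \geq t+2$; since $b \leq t$ we must have $a \geq 2$, so the residual graph $K_a + I_b$ contains a triangle (two vertices of $K_a$ with one vertex of $I_b$ if $b \geq 1$; if $b = 0$ then $a \geq t+2 \geq 4$ and $K_a$ itself contains a triangle). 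The corresponding closed walk in $U$ consists of three linking paths, and because $U$ is bipartite this cycle must have even length, forcing at least one of the three linking paths to have length $\geq 2$. That linking path contributes at least one internal subdivision vertex to $U$, giving $\abs{V(U)} \geq (a+b) + 1 \geq t+3$.

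The main obstacle is the tight case $\abs{X} = 2t-4$, where counting branch vertices alone falls one short of the required bound $t+3$; the resolution is to exploit the bipartiteness of $U$ together with the unavoidable presence of a triangle in $K_a + I_b$ to extract an additional subdivision vertex. All remaining verifications (checking the parameter conditions of Lemma~\ref{paritybreak}, and that $a \geq 2$) are routine.
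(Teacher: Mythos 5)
Your proposal is correct and follows essentially the same route as the paper: combine Lemmas~\ref{force} and~\ref{paritybreak} to get $X$ and a bipartite block $U$ containing at least $3t-2-\abs{X}\ge t+2$ branch vertices together with their linking paths, then play the bipartiteness of $U$ against a triangle among the surviving branch vertices of $K_{2t-2}+I_t$ to force at least one subdivision vertex. The only cosmetic difference is that the paper argues uniformly (the induced subgraph on the $\ge t+2$ surviving branch vertices is never bipartite, so some linking path must be subdivided, giving $\abs{V(U)}\ge t+3$ at once), whereas you split on whether $\abs{X}\le 2t-5$; both are fine.
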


\begin{proof}
Let $H$ be a bipartite $K_{2t-2} + I_t$ subdivision of $G$, and $C = \left \{v_1 , \dots , v_{3t-2} \right \}$ be the set of all branch vertices in $H$. For convenience, we identify each vertex in $C$ with its corresponding vertex in $V(K_{2t-2} + I_t)$. Let $C_1 \subseteq C$ be the set of branch vertices corresponding to vertices in $K_{2t-2}$, and $C_2 = C \setminus C_1$ be the set of branch vertices corresponding to vertices in $I_t$.

By Lemmas~\ref{force} and~\ref{paritybreak}, there exists $X \subseteq V(G)$ with $|X| \leq 2t-4$ such that $G\setminus X$ has a bipartite block $U$ containing at least $(3t-2) - |X|$ vertices in $C \setminus X$ and all linking paths between them. Let $C' \subseteq C$ be those $(3t-2) - |X| \geq t+2$ branch vertices in $U$, and $H'$ be the union of all linking paths between vertices in $C'$, which is a subgraph of $U$. 

Recall that we identified $C \subseteq V(H)$ with $V(K_{2t-2} + I_t)$. Since vertices in $C_1$ form a clique of $K_{2t-2} + I_t$ and $|C' \cap C_1| \geq |C'| - |C_2| \geq 2$, the subgraph of $K_{2t-2} + I_t$ induced by $C'$ is not bipartite. To obtain $H'$ from the induced subgraph of $K_{2t-2} + I_t$, we should subdivide edges at least once, because $H'$ is bipartite. Thus $H'$ contains a vertex other than vertices in $C'$, implying $|V(U)| \geq |V(H')| \geq |C'|+1 \geq t+3$.
\end{proof}

\section{Proofs of Theorems~\ref{mainthm} and~\ref{mainthm2}}\label{sec:coloring}
For a class $\mathcal{F}$ of graphs and an integer $d \geq 0$, a graph $G$ has a \emph{$(d, \mathcal{F})$-coloring} if there is $f : V(G) \to [d]$ such that $G[f^{-1}(\{i\})]$ is in $\mathcal{F}$ for all $i \in [d]$. A class $\mathcal{F}$ of graphs is \emph{closed under isomorphisms} if for all $G \in \mathcal{F}$, every graph isomorphic to $G$ is in $\mathcal{F}$. A class $\mathcal{F}$ of graphs is \emph{closed under taking disjoint unions} if for all $G,H \in \mathcal{F}$, the disjoint union of $G$ and $H$ is in $\mathcal{F}$.

Now we are ready to prove the following lemma,
following the idea of Kawarabayashi and Mohar~\cite{KM2007a}.

\begin{LEM}\label{precoloring}
Let $t \geq 2$ and $d \geq 3$ be integers and $\mathcal{F}$ be a class of graphs closed under isomorphisms and taking disjoint unions, which satisfies the following.
\begin{itemize}
\item[(i)] $\mathcal{F}$ contains every graph with at most $4t-7$ vertices.
\item[(ii)] If a graph $H$ contains no odd $K_t$ minor and no bipartite $K_{2t-2} + I_{t}$ subdivision, then $H$ has a $(d , \mathcal{F})$-coloring. 
\end{itemize}

Then every graph with no odd $K_t$ minor has a $(d+4t-7, \mathcal{F})$-coloring.
\end{LEM}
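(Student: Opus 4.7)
I plan to proceed by induction on $|V(G)|$. If $|V(G)| \leq 4t-7$, condition~(i) gives $G \in \mathcal{F}$, so a single color suffices. If $|V(G)| > 4t-7$ and $G$ has no bipartite $K_{2t-2}+I_{t}$ subdivision, condition~(ii) yields a $(d,\mathcal{F})$-coloring, which uses at most $d \leq d+4t-7$ colors. Otherwise Theorem~\ref{decomp} supplies $X \subseteq V(G)$ with $|X| \leq 2t-4$ and a bipartite block $U$ of $G-X$ with $|V(U)| \geq t+3$, say with bipartition $\{L,R\}$.

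In this last case I would build the coloring in three layers. Layer~(a) assigns each $x_j \in X$ its own singleton color $j \in \{1,\ldots,|X|\}$. Layer~(b) colors $L$ with $|X|+1$ and $R$ with $|X|+2$, using the bipartition of $U$. Layer~(c) handles the remainder: since $U$ is 2-connected, each connected component of $(G-X)-V(U)$ attaches to $V(U)$ through a single cut vertex, and I group the components by their attachment cut vertex. For each cut vertex $p \in V(U)$ with attached components, let $G_p := G[X \cup \{p\} \cup (\text{components attached at } p)]$. Then $|V(G_p)|<|V(G)|$ (since $|V(U)| \geq t+3 > 1$), so induction gives a $(d+4t-7,\mathcal{F})$-coloring $f_p$. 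Using closure of $\mathcal{F}$ under isomorphism, I permute the colors of $f_p$ so that $f_p(x_j)=j$ for every $j$ and $f_p(p) \in \{|X|+1,|X|+2\}$ matches the side of the bipartition containing $p$; the coloring of $G$ then takes $f_p$'s values on each $G_p$ and the layer~(a), layer~(b) values on the rest of $X \cup V(U)$.

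The main obstacle is ensuring the permutation exists, which requires $f_p$ to take distinct values on the at most $2t-3$ vertices of $X \cup \{p\}$. My plan is to strengthen the inductive claim to deliver, for any graph with no odd $K_t$ minor and any prescribed set $Y$ of at most $2t-3$ vertices, a $(d+4t-7,\mathcal{F})$-coloring that is injective on $Y$. The flexibility needed for this strengthening comes from condition~(i): since $4t-7 \geq 2t-3$ for $t \geq 2$, the base case has enough singleton colors available to impose injectivity on $Y$, and closure under disjoint unions together with a careful choice at each inductive step propagates the property. Grouping all branches at a common cut vertex into a single $G_p$ is essential for step~(c): coloring them separately would create ``wedge'' subgraphs glued at the cut vertex that need not lie in $\mathcal{F}$.

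Finally, the correctness check is that each color class of the combined coloring lies in $\mathcal{F}$. The key observation is that different $G_p$'s meet only in $X$ and that branches at different cut vertices are non-adjacent in $G$. So for each color $c$ the combined class decomposes (up to the anchor $x_c$ for $c \in \{1,\ldots,|X|\}$, and isolated vertices from $L \setminus P$ or $R \setminus P$ for $c \in \{|X|+1,|X|+2\}$, where $P$ is the set of attachment cut vertices) as a disjoint union of the $\mathcal{F}$-pieces $G_p[f_p^{-1}(c)]$; closure under disjoint unions then places each class in $\mathcal{F}$.
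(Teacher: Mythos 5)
There is a genuine gap at the ``anchor'' vertices of $X$, and your own parenthetical remark is where it hides. For a color $j\in\{1,\dots,|X|\}$, the combined color class is not a disjoint union of the pieces $G_p[f_p^{-1}(j)]$: every one of these pieces contains $x_j$ (since you force $f_p(x_j)=j$), and $x_j$ may have neighbors of color $j$ inside the components attached at \emph{each} cut vertex $p$. The class is therefore a wedge of arbitrarily many $\mathcal{F}$-members glued at $x_j$, and $\mathcal{F}$ is only assumed closed under disjoint unions, not under identifying a vertex. In the two intended applications this really fails: if $\mathcal{F}$ is ``maximum degree at most $D$,'' each $f_p$ may give $x_j$ up to $D$ neighbors of color $j$, so $x_j$ accumulates degree $D\cdot|P|$ in its class; if $\mathcal{F}$ is ``every component has at most $M$ vertices,'' the component of $x_j$ grows like $M\cdot|P|$. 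Your proposed strengthening --- injectivity of $f_p$ on $X\cup\{p\}$ --- does nothing to prevent this; what is needed is the stronger requirement that each prescribed vertex receives a color different from \emph{all} of its neighbors outside the prescribed set, which is exactly condition (b) in the claim the paper proves by induction. (A second, related weakness: your strengthened claim ``there exists a coloring injective on any prescribed $Y$ with $|Y|\le 2t-3$'' is not delivered by your own construction, since two vertices of $Y$ landing in the same side of the bipartition of $U$ receive the same color; so the induction as stated does not close.)

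It is also worth noting that the paper avoids recursing on the branches $G_p$ altogether. Its minimal-counterexample argument first establishes that the precolored set $Z$ is stable and that no separation of order at most $2t-3$ has non-precolored vertices on both sides; combined with Theorem~\ref{decomp} this forces every component of $G\setminus X\setminus V(U)$ to lie entirely inside $Z$. Hence after removing $X$ and the bipartite block $U$ there is nothing left to color recursively --- the remainder is already precolored --- and the three fresh colors for $X\setminus Z$, $X_1$, and $X_2$ finish the proof. If you want to keep your branch-by-branch recursion, you must both strengthen the inductive hypothesis to separate prescribed vertices from their outside neighbors and explain how the prescribed set interacts with the decomposition inside each recursive call; the paper's separation lemma (step (2) of its claim) is essentially the tool that makes this interaction manageable.
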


\begin{proof}
We prove the following stronger claim.

\medskip
\noindent\textbf{Claim}. \emph{Let $G$ be a graph with no odd $K_t$ minor, $Z \subseteq V(G)$ with $|Z| \leq 4t-7$, and $f : Z \to [d+4t-7]$ be a coloring. Then $G$ has a $(d+4t-7, \mathcal{F})$-coloring $g$ that satisfies the following.
\begin{enumerate}[(a)]
\item For every $z \in Z$, $f(z) = g(z)$.
\item For every $v \in Z$ and its neighbor $w \notin Z$, $g(v) \ne g(w)$.
\end{enumerate}}

Let $G$ be a counterexample with the minimum $|V(G)|+|E(G)|$. As the claim is true for graphs with at most $4t-7$ vertices by giving distinct colors to each vertex not in $Z$, $|V(G)| \geq 4t-6$.
\medskip

(1) \emph{$Z$ is stable.}

Suppose there are adjacent $z_1 , z_2 \in Z$. Applying the claim on $G' = G \setminus z_1 z_2$ with the same $Z$ and $f$, $G'$ has a $(d+4t-7, \mathcal{F})$-coloring $g$ that satisfies the claim. We claim that every component of $G[g^{-1}(\{i\})]$ for some $i \in [d+4t-7]$ is in $\mathcal{F}$. Let $C$ be a component of $G[g^{-1}(\{i\})]$ for some $i \in [d+4t-7]$. If $V(C) \cap Z \ne \emptyset$ then $V(C) \subseteq Z$ by (b), implying $C \in \mathcal{F}$ as $|V(C)| \leq |Z| \leq 4t-7$. If $V(C) \cap Z = \emptyset$ then $C$ is a component of $G'[g^{-1}(\{i\})]$, which implies $C \in \mathcal{F}$. Therefore, $g$ is a $(d+4t-7 , \mathcal{F})$-coloring of $G$ satisfying (a) and (b), contradicting our assumption.
\medskip

(2) \emph{For every separation $(A,B)$ of order at most $2t-3$, either $V(A) \setminus V(B) \subseteq Z$ or $V(B) \setminus V(A) \subseteq Z$.}

Suppose $G$ has a separation $(A,B)$ of order at most $2t-3$ such that both $V(A) \setminus V(B) \setminus Z$ and $V(B) \setminus V(A) \setminus Z$ are nonempty. Since $|Z| = |V(A) \cap Z| + |(V(B)\setminus V(A)) \cap Z|$, we may assume $|(V(B) \setminus V(A)) \cap Z| \leq \lfloor \frac{|Z|}{2} \rfloor \leq 2t-4$. Note that $V(B) \setminus V(A) \setminus Z \ne \emptyset$ implies that $|V(A) \cup Z| < |V(G)|$ and we can apply the claim on $A \cup G[Z]$ with $Z$ and $f$. Let $g_1$ be a $(d+4t-7, \mathcal{F})$-coloring of $A \cup G[Z]$ satisfying (a) and (b). Let $Z' = V(A \cap B) \cup (V(B) \cap Z)$. Since $|Z'| = |V(A \cap B)| + |(V(B) \setminus V(A)) \cap Z| \leq 4t-7$, we can apply the claim on $B$ with $Z'$ and $g_1|_{Z'}$. Let $g_2$ be a $(d+4t-7, \mathcal{F})$-coloring of $B$ satisfying (a) and (b).

Let $g$ be a coloring on $V(G)$ such that for each vertex $v$ of $G$, 
\[g(v) =
  \begin{cases}
    g_1(v)&\text{for }v \in V(A), \text{ and }\\
    g_2(v)&\text{for }v \in V(B).
  \end{cases}
\]
This is well defined since $g_1$ is identical to $g_2$ on $Z'$. We claim that $g$ is a $(d+4t-7 , \mathcal{F})$-coloring of $G$ satisfying (a) and (b), which contradicts our assumption. 

By the definition of $g_1$, it follows that $g(z) = g_1(z) = f(z)$ for every $z \in Z$. For every $vw \in E(G)$ with $v \in Z$ and $w \notin Z$, $g(v) \ne g(w)$ since $g_1 (v) = g(v) \ne g(w) = g_2(w)$ if $w \in V(A)$ and $g_2(v) = g(v) \ne g(w) = g_2(w)$ if $w \in V(B)$. This verifies (a) and (b).

Let $C$ be a component of $G[g^{-1}(\{i\})]$ for some $i \in [d+4t-7]$. If $V(C) \cap Z \ne \emptyset$ then $V(C) \cap (V(A) \setminus Z) = \emptyset$ by the definition of $g_1$ and (b), and $V(C) \cap (V(B) \setminus V(A) \setminus Z) = \emptyset$ by the definition of $g_2$ and (b). This implies $V(C) \subseteq Z$ and thus $C \in \mathcal{F}$ as $|V(C)|\leq |Z| \leq 4t-7$. If $V(C) \cap Z = \emptyset$ and $V(A) \cap V(B) \cap V(C) \ne \emptyset$ then $V(C) \subseteq Z' \setminus Z \subseteq V(A) \cap V(B)$ by the definition of $g_2$ and (b). Thus $C$ is a component of $G[g_1^{-1}(\{i\})]$, implying $C \in \mathcal{F}$. Finally, if $V(C) \cap Z = \emptyset$ and $V(A) \cap V(B) \cap V(C) = \emptyset$, then either $V(C) \subseteq V(A)\setminus V(B) \setminus Z$ or $V(C) \subseteq V(B) \setminus V(A) \setminus Z$, which implies that $C \in \mathcal{F}$ as $C$ is a component of either $G[g_1^{-1}(\{i\})]$ or $G[g_2^{-1}(\{i\})]$.
\medskip

(3) \emph{$G \setminus Z$ contains a bipartite $K_{2t-2} + I_{t}$ subdivision.}

Since $|Z| \leq 4t-7$, we may assume $f(Z) \subseteq \left \{d+1 , \dots , d+4t-7 \right \}$ by permuting colors. Suppose $G \setminus Z$ does not contain a bipartite $K_{2t-2} + I_{t}$ subdivision. 
Let $g_0$ be a $(d , \mathcal{F})$-coloring of $G\setminus Z$. Let $g : V(G) \to [d+4t-7]$ be a coloring such that for each vertex $v$ of $G$, 
\[g(v) =
  \begin{cases}
    g_1(v)& \text{for every }v \in V(G) \setminus Z \text{ and,}\\
    f(z)& \text{for every }z \in Z    .
  \end{cases}
\] 
We claim that $g$ is a $(d+4t-7 , \mathcal{F})$-coloring of $G$ satisfying (a) and (b), which contradicts our assumption. Let $C$ be a component of $G[g^{-1}(\{i\})]$ for some $i \in [d+4t-7]$. Since $g$ is identical to  $g_1$ on $V(G) \setminus Z$ and $g(u) \ne g(v)$ for every $u \in V(G) \setminus Z$ and $v \in Z$, $C$ is a component of either $G[g_1^{-1}(\{i\})]$ or $G[f^{-1}(\{i\})]$. This implies $C \in \mathcal{F}$. This proves (3).
\medskip

Since $G$ contains a bipartite $K_{2t-2} + I_{t}$ subdivision, Theorem~\ref{decomp} implies that there exists $X \subseteq V(G)$ with $|X| \leq 2t-4$ such that $G \setminus X$ admits a block decomposition with a bipartite block $U$ having at least $t+3$ vertices.
\medskip

(4) \emph{Every component of $G \setminus X \setminus V(U)$ is a subgraph of $G[Z]$.}

Let $C$ be a component of $G \setminus X \setminus V(U)$. Let $V_C$ be the set of vertices in $U$ adjacent to a vertex in $C$. As $U$ is a block and $C$ is a component of $G \setminus X \setminus V(U)$, it follows that $|V_C| \leq 1$. If $|V_C| = 1$ then let $v_C$ be the unique vertex in $V_C$. Let $A_C = G[V(C) \cup X \cup V_C]$ and $B_C = G \setminus V(C)$. Note that $V(A_C) \cap V(B_C) = X \cup V_C$ and $(A_C , B_C)$ is a separation of $G$ of order at most $2t-3$, since $|X|+|V_C| \leq 2t-3$. By (2), either $V(A_C) \setminus V(B_C)$ or $V(B_C) \setminus V(A_C)$ is in $Z$. Since $Z$ is stable, $V(U) \setminus V_C \subseteq V(B_C) \setminus V(A_C)$ and $U$ is 2-connected as $|V(U)| \geq t+3$, $V(B_C) \setminus V(A_C)$ is not a subset $Z$. Therefore, $V(A_C) \setminus V(B_C) = V(C)$ is a subset of $Z$. This proves (4).
\medskip

Since $U \setminus Z$ is a bipartite subgraph of $G$, let $\left \{X_1 , X_2 \right \}$ be its bipartition. By (4), it follows that $V(G) = Z \cup (X \setminus Z) \cup X_1 \cup X_2$. Let us choose three colors $\left \{c_1 , c_2 , c_3 \right \} \subseteq [4t-4] \setminus f(Z)$. Let $g : V(G) \to [4t-4] \subseteq [d+4t-7]$ be a coloring defined as follows: 
\[g(x) =
  \begin{cases}
f(x)& \text{for }x \in Z, \\
c_1& \text{for }x \in X \setminus Z,\\
c_2&\text{if }x \in X_1, \\
c_3&\text{if }x \in X_2.
  \end{cases}
\]
We claim that $g$ is a $(d+4t-7 , \mathcal{F})$-coloring of $G$ satisfying (a) and (b), which contradicts our assumption.

Let $C$ be a component of $G[g^{-1}(\{i\})]$ for some $i \in [d+4t-7]$. One of the following cases hold: either $V(C) \subseteq Z$ or $V(C) \subseteq X \setminus Z$ or $V(C) \subseteq X_1$ or $V(C) \subseteq X_2$. Since $|Z|$ and $|X|$ are at most $4t-7$ and both $X_1$ and $X_2$ are stable in $G$, $C$ is in $\mathcal{F}$.
\end{proof}

Now we present proofs of our main theorems.

\begin{proof}[Proof of Theorem~\ref{mainthm}]
Let $\mathcal{F}$ be the set of graphs of maximum degree at most $\max(N(2t-2 , t) , 4t-8)$ where $N$ is in Corollary~\ref{cor:genbipweakhad}. Corollary~\ref{cor:genbipweakhad} implies that every graph with no bipartite $K_{2t-2} + I_{t}$ subdivision has a $(2t-2, \mathcal{F})$-coloring. By Lemma~\ref{precoloring}, $G$ has a $(6t-9, \mathcal{F})$-coloring, implying that $G$ is $(6t-9)$-colorable with defect $\max(N(2t-2 , t) , 4t-8)$.
\end{proof}

\begin{proof}[Proof of Theorem~\ref{mainthm2}]
Let $u(t) := C(t , N(2t-2 , t))$ where $C$ is in Theorem~\ref{partition} and $N$ is in Corollary~\ref{cor:genbipweakhad}. Let $\mathcal{F}$ be the set of graphs that every component has at most $\max(u(t) , 4t-7)$ vertices. By Corollary~\ref{cor:genbipweakhad} and Theorem~\ref{partition}, every graph with no odd $K_t$ minor and no bipartite $K_{2t-2} + I_{t}$ subdivision has a $(3(2t-2), \mathcal{F})$-coloring. By Lemma~\ref{precoloring}, $G$ has a $(10t-13, \mathcal{F})$-coloring, implying that $G$ is $(10t-13)$-colorable with clustering $\max(u(t) , 4t-7)$.
\end{proof}

\section{Concluding Remarks}\label{sec:conclude}
\subsection{List-coloring variant}
We may consider a list-coloring variant of defective coloring. For integers $s,N \geq 0$, a graph $G$ is \emph{$s$-choosable} with \emph{defect $N$} if for every set of lists $\left \{ L_v \right \}_{v \in V(G)}$ with $|L_v| \geq s$ for every $v \in V(G)$, there is a map $f : V(G) \to \bigcup_{v \in V(G)}{L_v}$ with $f(v) \in L_v$ for each $v \in V(G)$ such that $G[f^{-1}(\left\{i \right \})]$ has maximum degree at most $N$ for every $i \in \bigcup_{v \in V(G)}{L_v}$.

As we remarked in Section~\ref{sec:intro}, Theorems~\ref{thm:weakhad} and~\ref{thm:genweakhad} can be extended for list-colorings. For instance, Ossona de Mendez, Oum, and Wood~\cite{ossonademendez2016} showed that for integers $s,t \geq 1$ and every graph $G$ with no $K_{s,t}^*$ subgraph, there is $N=N(s,t)$ such that $G$ is $s$-choosable with defect $N$. It follows that for $t \geq 1$, every graph with no $K_{t+1}$ minor is $t$-choosable with defect $M$ for some constant $M=M(t)$, which is also implied by the proof of Edwards, Kang, Kim, Oum, and Seymour~\cite{EKKOS2014}.

Note that every $n$-vertex graph with no $K_t$ minor contains $O(t \sqrt{\log t}\,n)$ edges~\cite{Kostochka1982, Kostochka1984,Thomason1984, Thomason2001}. In contrast to graphs with no $K_t$ minor, an $n$-vertex graph with no odd $K_3$ minor may contain $\Omega(n^2)$ edges. For example, complete bipartite graphs have no odd $K_3$ minor. 

\begin{THM}[Kang~\cite{kang2013improper}]\label{nolistvariant}
For each integer $N \geq 0$, there is a function $s = s(d) = (1/2 + o(1))\log_2 d$ as $d \to \infty$ such that if a graph $G$ has minimum degree at least $d$, $G$ is not $s$-choosable with defect $N$.
\end{THM}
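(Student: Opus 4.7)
The plan is to adapt Alon's classical probabilistic proof that $\chi_\ell(G) \geq (1/2 + o(1))\log_2 \delta(G)$ to the defective setting, where each color class is now permitted to induce a subgraph of maximum degree up to $N$ rather than only an independent set. The strategy is to exhibit, for every sufficiently large $d$, a random list assignment with lists of size $s = \lfloor (1/2 - \varepsilon)\log_2 d\rfloor$ drawn from a large color pool $[c]$, such that with positive probability no list coloring of $G$ with defect at most $N$ exists; consequently some concrete bad list assignment must exist.

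First I would fix the color pool $[c]$ with $c$ of order $d/(N+1)$, and sample, for each $v \in V(G)$, a list $L_v \in \binom{[c]}{s}$ uniformly and independently. For any fixed coloring $\phi : V(G) \to [c]$, the probability that $\phi(v) \in L_v$ for all $v$ is exactly $(s/c)^{|V(G)|}$. Writing $\Phi_N$ for the set of colorings $V(G) \to [c]$ whose color classes all induce subgraphs of maximum degree at most $N$, it suffices to prove
\[
\Bigl(\tfrac{s}{c}\Bigr)^{|V(G)|}\,|\Phi_N|\;<\;1,
\]
since the failure of the expected number of valid defect-$N$ list colorings to reach $1$ yields a concrete list assignment that defeats every candidate $\phi$.

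The heart of the argument is the bound on $|\Phi_N|$. Viewing $|\Phi_N|/c^{|V(G)|}$ as the probability that a uniformly random $\phi \in [c]^{V(G)}$ has defect at most $N$, I would localize at each vertex: the number of neighbors of $v$ receiving color $\phi(v)$ is stochastically dominated by $\mathrm{Bin}(d(v), 1/c)$, whose lower tail at $N$ is at most $e^{-\Omega(d/c)}$ as soon as $c \ll d/(N+1)$. Combining these per-vertex estimates (this is the delicate point; see below) yields a bound of the form $|\Phi_N| \le c^{|V(G)|}\rho^{|V(G)|}$ for some $\rho = \rho(d,c,N)$ satisfying $\rho \le 2^{-(1 - o(1))s}$ after tuning $c$ near $d/(N+1)$. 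Substituting into the displayed inequality reduces it to $s \le (1/2+o(1))\log_2 d$, giving the claimed function $s(d)$.

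The main obstacle lies in that last combination step: the events ``$v$ sees at most $N$ same-color neighbors'' are not independent across $v$, so the per-vertex bounds cannot simply be multiplied. In Alon's proper-coloring argument this is resolved either by an entropy/Shearer-type inequality applied to the random coloring (exploiting that color classes are independent sets) or by the Lovász Local Lemma controlling local dependencies; here one must carry the extra slack coming from $N$ through the same machinery. It is precisely this slack, and the corresponding $\sqrt{\,\cdot\,}$ loss compared with the complete-bipartite case $K_{n,n}$ (which achieves $(1+o(1))\log_2 n$), that accounts for the constant $1/2$ rather than $1$ in the statement.
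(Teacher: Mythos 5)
The paper does not prove this statement; it is quoted from R.~J.~Kang's paper \emph{Improper choosability and Property B}, whose argument (following Alon's lower bound for choosability in terms of minimum degree) runs through the 2-colorability of hypergraphs: one passes to a suitable bipartite subgraph, assigns lists built from a random partition of the color palette, and derives a contradiction from a defective analogue of Property~B. Your proposal takes a different route --- a first-moment computation over uniformly random lists --- and this route does not merely have a ``delicate point''; it fails outright.

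The central inequality you need, $(s/c)^{|V(G)|}\,\abs{\Phi_N} < 1$, is false already in the extremal case $N=0$, $G=K_{d,d}$. There, $\Phi_0$ contains all colorings that use only colors $\{1,\dots,\lfloor c/2\rfloor\}$ on one side and the remaining colors on the other, so $\abs{\Phi_0}\ge (c/2)^{2d}$ and hence
\[
\Bigl(\tfrac{s}{c}\Bigr)^{2d}\abs{\Phi_0}\;\ge\;\Bigl(\tfrac{s}{2}\Bigr)^{2d},
\]
which is astronomically large for every $s\ge 3$. The expected number of valid list colorings being large is of course inconclusive, but it means no refinement of the bound on $\abs{\Phi_N}$ can rescue the method: the per-vertex estimates genuinely cannot be multiplied, because the events ``$v$ sees at most $N$ same-colored neighbors'' are massively positively correlated (a random coloring that happens to segregate the palette between the two sides satisfies all of them simultaneously). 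Neither the Local Lemma (which bounds such intersection probabilities from \emph{below}, the wrong direction) nor a Shearer-type entropy inequality (whose extremal configurations are exactly these segregated colorings) closes this gap. Had the multiplication been legitimate, you would have deduced non-$s$-choosability for $s$ nearly as large as $d/\log d$, contradicting $\chi_\ell(K_{d,d})=(1+o(1))\log_2 d$ --- a useful sanity check that the approach is unsound rather than incomplete. To prove the theorem you need the Property-B mechanism: choose the lists themselves at random (e.g.\ as random $s$-subsets arranged so that every choice function must ``use up'' too many colors on each side of a dense bipartite subgraph), rather than averaging over all colorings; the parameter $N$ then enters by requiring each color to appear on more than $N$ neighbors before it is forbidden, which is where the defect slack is absorbed.
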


By Theorem~\ref{nolistvariant}, it follows that for integers $t \geq 1$ and $s, N \geq 0$, there are graphs with no odd $K_t$ minor not $s$-choosable with defect $N$.

\subsection{Extending Theorems~\ref{mainthm} and~\ref{mainthm2}} We extend our main results to a slightly larger class of graphs. As we mentioned in Section~\ref{sec:term}, $G$ contains $H$ as an odd minor if and only if a signed graph $(G,E(G))$ contains a signed graph $(H, E(H))$ as a minor. We review the concepts of signed graphs and their minors in Appendix~\ref{app}. 

Given $\Sigma \subseteq E(H)$, we provide alternative characterization for signed graphs $(G,E(G))$ containing $(H , \Sigma)$ as a minor. A signed graph $(G,E(G))$ contains a signed graph $(H , \Sigma)$ as a \emph{minor} if and only if 
\begin{enumerate}
\item there exist vertex-disjoint subgraphs $\left \{T_u \right \}_{u \in V(H)}$ in $G$ which are trees, and 
\item a coloring $\alpha : \bigcup_{u \in V(H)}{V(T_u)} \to \left \{1,2 \right \}$ such that for every $u \in V(H)$, every edge in $T_u$ is bichromatic, and for every edge $vw \in E(H)$, there is an edge $e \in E(G)$ that joins $V(T_v)$ and $V(T_w)$ where $e$ is monochromatic if and only if $vw \in \Sigma$.
\end{enumerate}

Note that for every $\Sigma \subseteq E(K_t)$, a signed graph $(K_{2t} , E(K_{2t}))$ contains $(K_t , \Sigma)$ as a minor. Replacing $t$ by $2t$, Theorem~\ref{mainthm} implies that for every $t \geq 2$ and every $\Sigma \subseteq E(K_t)$, if $(G,E(G))$ contains no $(K_t , \Sigma)$ as a minor, then $G$ is $(12t-9)$-colorable with defect $s(2t)$. Theorem~\ref{mainthm2} also implies that for every $t \geq 2$ and every $\Sigma \subseteq E(K_t)$, if $(G,E(G))$ contains no $(K_t , \Sigma)$ as a minor, then $G$ is $(20t-13)$-colorable with clustering $C(2t)$.

By modifying the proofs in Section~\ref{sec:structure}, we can improve these bounds further. In the proof of Lemma~\ref{force}, we join $V(M_i)$ and $V(M_j)$ with a parity-breaking path with respect to $\alpha$ for $1 \leq i < j \leq t$. Because $\alpha(x_i) = \beta(x_i)$ and $\alpha(y_i) \ne \beta(y_i)$, we can also join $V(M_i)$ and $V(M_j)$ with a path that is not parity-breaking with respect to $\alpha$. In particular, Lemma~\ref{force} forces not only an odd $K_t$ minor, but also a signed $(K_t , \Sigma)$ minor for \emph{every} $\Sigma \subseteq E(K_t)$. This extends Theorems~\ref{mainthm} and~\ref{mainthm2} as follows.

\begin{COR}\label{mainthm_2}
For each integer $t \geq 2$, there exists an integer $s = s(t)$ such that for every $\Sigma \subseteq E(K_t)$ and $(G,E(G))$ with no $(K_t , \Sigma)$ minor, the graph $G$ is $(6t-9)$-colorable with defect~$s$.
\end{COR}

\begin{COR}\label{mainthm2_2}
For each integer $t \geq 2$, there exists an integer $C = C(t)$ such that for every $\Sigma \subseteq E(K_t)$ and $(G,E(G))$ with no $(K_t , \Sigma)$ minor, the graph $G$ is $(10t-13)$-colorable with clustering $C$.
\end{COR}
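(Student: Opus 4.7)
The plan is to mirror the proof of Theorem~\ref{mainthm2}, replacing ``odd $K_t$ minor'' by ``$(K_t,\Sigma)$ minor'' throughout. The only essential change is at the level of Lemma~\ref{force}; everything downstream (Theorem~\ref{decomp} and Lemma~\ref{precoloring}) either does not refer to the minor hypothesis or only uses that the property of containing no $(K_t,\Sigma)$ minor is preserved under subgraphs and minors, so these adapt mechanically.

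The first step is to strengthen Lemma~\ref{force} to: under the same hypothesis, the signed graph $(G,E(G))$ contains $(K_t,\Sigma)$ as a signed minor for \emph{every} $\Sigma\subseteq E(K_t)$. Running the original argument produces branch trees $M_1,\dots,M_t$ and a coloring $\alpha$ with $\alpha(x_i)=\beta(x_i)$ and $\alpha(y_i)\ne\beta(y_i)$ for $1\le i\le t-1$. For each edge $ij$ of $K_t$ with $1\le i<j\le t$, the original proof chooses a single connecting path $P_{i,j}$ inside $H$ attaching to one of $x_i,y_i$ on the $M_i$-side (and to one of $x_j,y_j$ on the $M_j$-side when $j<t$); however, rerouting through the other side of $M_i$ (or of $M_j$) yields an alternative path still inside $H$, and because the two candidate attachment vertices on the same $M_i$ receive opposite $\alpha$-values, Observation~\ref{pasting} guarantees that the two alternatives have opposite parities with respect to $\alpha$. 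Hence, for each $ij$ independently we may prescribe whether $P_{i,j}$ is parity-breaking or not, realising any desired $\Sigma$; internal disjointness of the family $\{P_{i,j}\}$ is preserved because every choice lies inside $H$, and Claim~\ref{path} continues to describe the intersections with $\mathcal P$.

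With the upgraded Lemma~\ref{force}, Theorem~\ref{decomp} carries over word-for-word and yields: if $(G,E(G))$ contains no $(K_t,\Sigma)$ minor and $G$ contains a bipartite $K_{2t-2}+I_t$ subdivision, then some $X\subseteq V(G)$ of size at most $2t-4$ leaves $G\setminus X$ with a bipartite block on at least $t+3$ vertices, since Lemma~\ref{paritybreak} does not involve the minor hypothesis. Lemma~\ref{precoloring} then adapts verbatim, because its separation and recoloring arguments only use hereditariness of the forbidden-minor property under subgraphs. To verify its condition~(ii) I would combine Corollary~\ref{cor:genbipweakhad} (which makes no minor assumption) with a signed-minor analog of Theorem~\ref{partition}, exactly as in the proof of Theorem~\ref{mainthm2}: the former gives a $(2t-2)$-coloring of bounded defect, and the latter refines each color class into $3$ parts of bounded clustering, yielding $3(2t-2)=6t-6$ parts, from which Lemma~\ref{precoloring} produces the desired $(10t-13)$-coloring with clustering depending only on $t$.

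The main obstacle is the signed-minor analog of Theorem~\ref{partition}: every $(G,E(G))$ of bounded maximum degree with no $(K_t,\Sigma)$ minor is $3$-colorable with clustering depending on $t$, $\Sigma$, and $\Delta(G)$. I expect this to follow from the proof of Liu and Oum~\cite{liu2017partitioning} with only notational changes, since their argument is structural and transfers naturally to the signed-graph framework; failing that, one can fall back on applying Theorem~\ref{partition} with $H=K_{2t}$, at the cost of a larger (but still finite) clustering constant, using the observation that $(K_{2t},E(K_{2t}))$ contains every $(K_t,\Sigma)$ as a signed minor.
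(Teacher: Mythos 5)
Your proposal is correct and follows essentially the same route as the paper: the paper's own justification of Corollary~\ref{mainthm2_2} is precisely the observation that in Lemma~\ref{force} one has $\alpha(x_i)=\beta(x_i)$ and $\alpha(y_i)\ne\beta(y_i)$, so each connecting path can be chosen with either parity, forcing $(K_t,\Sigma)$ for every $\Sigma$, after which Theorem~\ref{decomp} and Lemma~\ref{precoloring} carry over unchanged. Your fallback for the Liu--Oum step (applying Theorem~\ref{partition} with $H=K_{2t}$, since $(K_{2t},E(K_{2t}))$ contains every $(K_t,\Sigma)$ as a signed minor) matches the observation the paper itself records and cleanly closes the one point the paper leaves implicit.
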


\subsection{Upper bound of maximum degree}
In Theorem~\ref{thm:genweakhad}, the function $M(s,t,\delta_1 , \delta_2)$ is defined as follows.
\[ M(s,t,\delta_1,\delta_2) = \left\{ \begin{array}{lll}
								         t-1 & \mbox{if $s=1$}\\
								         \frac{\delta_2 t(\delta_1 - 2)}{2} + \delta_1 & \mbox{if $s=2$}\\
								         (\delta_1 - s)\left ( \binom{\lfloor \delta_2 \rfloor}{s-1}(t-1) + \frac{\delta_2}{2} \right ) + \delta_1 & \mbox{if $s>2$}
								        \end{array} \right. \]
								        
Therefore, it follows that $N(2t-2,t) = M(2t-2,t,O(t^2),O(t^2)) = \exp(O(t \log t))$ in Corollary~\ref{cor:genbipweakhad}. Hence we have the upper bound of $s(t) = \exp(O(t \log t))$ in Theorem~\ref{mainthm}.

If one replaces a bipartite $K_{2t-2} + I_t$ subdivision of Lemma~\ref{force} with a bipartite $K_{3t-2}$ subdivision, one may set $s(t) = O(t^4)$ since $N(3t-3,1) = O(t^4)$. However, this will increase the number of colors from $6t-9$ to $7t-10$ of Theorem~\ref{mainthm}, as graphs with no bipartite $K_{3t-2}$ subdivision are defectively colored with $3t-3$ colors, which is more than $2t-2$ colors in defective coloring of graphs with no bipartite $K_{2t-2} + I_t$ subdivision.

\appendix
\section{signed graphs} \label{app}
We review elementary concepts of signed graphs, following definitions in~\cite{harary1953notion} and~\cite{huynh2012even} unless stated otherwise.

A \emph{signed graph} $(G,\Sigma)$ is a graph $G=(V,E)$ equipped with a \emph{signature} $\Sigma \subseteq E$. To avoid confusion, graphs always denote unsigned graphs. Every signed graph is assumed to be simple; parallel edges and loops are not allowed. If an edge $e \in E(G)$ is in $\Sigma$, $e$ is \emph{negative}. Otherwise, $e$ is \emph{positive}. 

For two sets $A$ and $B$, $A \Delta B$ denotes the set $(A \setminus B) \cup (B \setminus A)$. For a graph $G$ and $X \subseteq V(G)$, let $\delta_G(X)$ be the set of edges joining $X$ and $V(G) \setminus X$. For a signature $\Sigma$, a \emph{re-signing} on $X$ is an operation that replaces $\Sigma$ with another signature $\Sigma \Delta \delta_G (X)$ for some $X \subseteq V(G)$. Note that for $X,Y \subseteq V(G)$, applying re-signing on $X$ and $Y$ is identical to applying re-signing on $X \Delta Y$. In particular, \emph{re-signing at $v$} is the operation that replaces $\Sigma$ with $\Sigma \Delta \delta_G (\left \{ v \right \})$. Note that applying a re-signing operation on $X$ is identical to applying re-signing operations at all vertices in $X$.

Two signatures $\Sigma$ and $\Sigma'$ are \emph{equivalent} if $\Sigma'$ can be obtained from $\Sigma$ by re-signing operations; $\Sigma$ and $\Sigma'$ are equivalent if and only if there is $X \subseteq V(G)$ such that $\Sigma' = \Sigma \Delta \delta_G(X)$. Two signed graphs $(G,\Sigma)$ and $(G,\Sigma')$ are \emph{equivalent} if $\Sigma$ is equivalent to $\Sigma'$.

A cycle $C$ is called \emph{balanced} if it contains an even number of negative edges. Two signed graphs $(G , \Sigma)$ and $(G , \Sigma')$ have the same set of balanced cycles if and only if $\Sigma$ and $\Sigma'$ are equivalent (see~\cite{harary1953notion}).

A map $f : V(G) \to V(H)$ is an \emph{isomorphism} from $(G,\Sigma)$ and $(H,\Sigma')$ if $f$ is an isomorphism from $G$ to $H$, and $uv \in \Sigma$ if and only if $f(u)f(v) \in \Sigma'$. If there is an isomorphism from $(G,\Sigma)$ to $(H,\Sigma')$, $(G,\Sigma)$ is \emph{isomorphic} to $(H,\Sigma')$.

For two signed graphs $(G,\Sigma)$ and $(H,\Sigma')$, $(H, \Sigma')$ is a \emph{minor} of $(G, \Sigma)$ if a signed graph isomorphic to $(H , \Sigma')$ can be obtained from $(G , \Sigma)$ by deleting vertices, deleting edges, applying re-signing operations, and contracting positive edges.

To avoid parallel edges, if we contract a positive edge $uv$ such that there exists a vertex $w \notin \left \{u,v \right \}$ and edges $wu, wv \in E(G)$ of different signs, then we should remove either $wu$ or $wv$ before contracting $uv$.

When applying a series of operations to find minors, we may assume that deleting vertices and edges always precede re-signing operations; contracting a positive edge $uv$ into a new vertex $t$ and re-signing at $t$ is identical to re-signing on $\left \{u,v \right \}$ and contracting a positive edge $uv$ into a vertex $t$. This implies the following, which can be found in~\cite{geelen2004colouring}.

\begin{LEM}\label{parityminor}
For graphs $G$, $H$ and a signature $\Sigma \subseteq E(H)$, a signed graph $(G , E(G))$ contains a signed graph $(H , \Sigma)$ as a minor if and only if
\begin{enumerate}
\item there are vertex-disjoint subgraphs $\left \{T_u \right \}_{u \in V(H)}$ of $G$ assigned to vertices in $V(H)$,
\item for every $u \in V(H)$, $T_u$ is a tree and has a proper $2$-coloring  $c_u : V(T_u ) \to \left \{1,2 \right \}$, and
\item for every edge $uv \in E(H)$, there is an edge $e = ab \in E(G)$ that joins $T_u$ and $T_v$ such that $c_u (a) = c_v (b)$ if and only if $uv \in \Sigma$.
\end{enumerate}
\end{LEM}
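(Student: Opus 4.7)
My plan is to prove both directions of the equivalence by putting any sequence of minor operations on $(G, E(G))$ into the normal form \emph{delete, then apply a single re-signing, then contract positive edges}. For the $(\Rightarrow)$ direction, suppose $(G, E(G))$ contains $(H, \Sigma)$ as a minor. Deletions commute with the other operation types (up to relabeling), so I would push them to the front, producing some $(G_0, E(G_0))$ in which every edge is still negative. I then invoke the appendix's identity that contracting a positive edge followed by re-signing at the new vertex equals re-signing on its two endpoints followed by contraction; iterating, I push all re-signings back past the contractions and combine them into a single re-signing on the symmetric difference $X$. After re-signing on $X$ the positive edges are precisely those of $\delta_{G_0}(X)$, i.e., the bichromatic edges for the partition $(X, V(G_0)\setminus X)$. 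For each $u \in V(H)$, let $T_u$ be a spanning tree of the positive-edge component that is contracted to the super-vertex $u$; all its edges are bichromatic, so $c_u(x) := 1$ if $x \in X$ and $c_u(x) := 2$ otherwise is a proper 2-coloring of $T_u$. For each $uv \in E(H)$ the image edge arises from a surviving edge $ab$ of $G_0$ between $T_u$ and $T_v$, whose final sign is negative iff it lies outside $\delta_{G_0}(X)$ iff $c_u(a) = c_v(b)$; and it is negative iff $uv \in \Sigma$, which is condition~(3).

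For the $(\Leftarrow)$ direction, given trees $\{T_u\}$ and proper 2-colorings $\{c_u\}$ satisfying (1)--(3), I would construct an explicit minor-witnessing sequence. Let $X := \bigcup_{u \in V(H)} c_u^{-1}(1)$. Delete every vertex of $G$ outside $\bigcup_u V(T_u)$, and for each $uv \in E(H)$ keep exactly one edge $e_{uv}$ provided by~(3) while deleting all other edges of $G$ not lying in some $T_u$. Next, re-sign on $X$. Since each $c_u$ is a proper 2-coloring of $T_u$, every edge of $T_u$ is bichromatic and thus becomes positive; contracting all edges of every $T_u$ collapses $V(T_u)$ to a single vertex playing the role of $u$. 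Meanwhile the surviving $e_{uv} = ab$ ends up negative iff it is monochromatic iff $c_u(a) = c_v(b)$ iff $uv \in \Sigma$, so the final signed graph is isomorphic to $(H, \Sigma)$.

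The main obstacle I anticipate is justifying the normal-form reordering in the forward direction: the appendix supplies only the single identity swapping a contraction with a re-signing at the just-contracted vertex, so a short induction on the number of operations is needed to push re-signings on arbitrary sets past the entire contraction phase. One must also handle the parallel-edge rule when a contraction meets external edges of opposite signs, and in the backward direction one must select a single $e_{uv}$ per $uv \in E(H)$ to avoid producing parallel edges during contraction. Once the operations are normalized, the remaining verifications are direct bookkeeping checks.
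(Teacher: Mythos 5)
Your proposal is correct and follows essentially the same route as the paper, which only sketches the argument: it states the commutation identity (contracting a positive edge $uv$ into $t$ then re-signing at $t$ equals re-signing on $\{u,v\}$ then contracting) and the normal form ``delete, then re-sign, then contract,'' citing Geelen--Huynh for the rest. Your elaboration of both directions --- reading the trees and $2$-colorings off the single re-signing set $X$ in the forward direction, and building the explicit operation sequence (with one surviving edge $e_{uv}$ per $uv\in E(H)$ to avoid parallel edges) in the backward direction --- is a sound filling-in of exactly that sketch.
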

\end{document}